\newtheorem{thm}{Theorem}[section]
\newtheorem{cor}[thm]{Corollary}
\newtheorem{lem}[thm]{Lemma}
\newtheorem{prop}[thm]{Proposition}
\theoremstyle{definition}
\newtheorem*{hypothesis-s}{Hypothesis S}
\theoremstyle{remark}
\newtheorem{rem}{Remark}[section]
\numberwithin{equation}{section}
\def \P  {{\mathcal P}}
\def \NN {\mathbb N}
\def \CC {\mathbb C}
\def \QQ {\mathbb Q}
\def \RR {\mathbb R}
\def \ZZ {\mathbb Z}
\def \F  {{\mathcal F}}
\def \I  {{\mathcal I}}
\def \J  {{\mathcal J}}
\def \P  {{\mathcal P}}
\def \M  {{\mathcal M}}
\def \d {\text{d}}
\renewcommand{\Re}{\mathop{\mathrm{Re}}}
\newcommand{\mop}{(\text{mod}\,p)}
\newcommand{\mopq}{(\text{mod}\,pq)}
\newcommand{\moq}{({\rm mod}\,q)}
\newcommand{\modq}{({\rm mod}\,q_1)}
\newcommand{\eval}[2][\right]{\relax
  \ifx#1\right\relax \left.\fi#2#1\rvert}
\let\abs=\envert
\begin{document}
\vskip 5mm

\title[]{\textbf {Correlations of multiplicative functions with automorphic $L$-functions}}

\author{Yujiao Jiang}

\address{Yujiao Jiang\\
School of Mathematics and Statistics
\\
Shandong University
\\
Weihai
\\
Shandong 264209
\\
China}
\email{yujiaoj@sdu.edu.cn}

\author{Guangshi L\"u}

\address{Guangshi L\"u\\
School of Mathematics
\\
Shandong University
\\
Jinan
\\
Shandong 250100
\\
China}
\email{gslv@sdu.edu.cn}

\date{\today}

\begin{abstract}
\small {
Let $\lambda_{\phi}(n)$ be the Fourier coefficients of a Hecke holomorphic or Hecke--Maass cusp form on ${\rm SL}_2(\mathbb Z)$, and $f$ be any multiplicative function that satisfies two mild hypotheses. We establish a non-trivial upper bound for the correlation $\sum_{n \leq X}f(n)\lambda_{\phi}(n+h)$ uniformly in $0<|h|\ll X$. As applications, we consider some special cases, including $\lambda_{\pi}(n), \,\mu(n)\lambda_{\pi}(n)$ and any divisor-bounded multiplicative function. Here $\lambda_{\pi}(n)$ denotes the $n$-th Dirichlet coefficient of $\text{GL}_m$ automorphic $L$-function $L(s,\pi)$ for an automorphic irreducible cuspidal representation $\pi$, and $\mu(n)$ denotes the M\"obius function. In particular, some savings are achieved for shifted convolution problems on ${\rm GL}_m\times {\rm GL}_2\, (m\geq 4)$ and Hypothesis C for the first time.
}
\end{abstract}
\subjclass[2010]{11N37, 11N36, 11F30, 11F66}
\keywords{multiplicative functions, Fourier coefficients, automorphic $L$-functions, M\"obius function, sieve method}
\maketitle

\section{Introduction}
Let $f(n)$ and $g(n)$ denote two multiplicative functions. It is a classical and fundamental problem in analytic number theory to understand the correlations
\begin{equation}\label{eq-cor}
\sum_{n\leq X}f(n)g(n+h),
\end{equation}
where $h$ is a non-zero integer. It is directly related to many celebrated unsolved problems in number theory, such as the Chowla conjecture and the divisor correlation conjecture. If $h=0$, this problem then degenerates to the mean values of the multiplicative functions, which can be tackled by a coherent set of tools. However, very little is known about correlations \eqref{eq-cor}. The difficulty here is that the shift parameter $h$ destroys the multiplicativity. It is worth noting that Klurman \cite{Klurman-2017} recently obtained an asymptotic formula of correlation \eqref{eq-cor} for some bounded pretentious multiplicative functions $f$ and $g$.

In this present work, we focus on the special case $g(n)=\lambda_{\phi}(n)$, where $\lambda_{\phi}(n)$ is the Fourier coefficient of any ${\rm GL}_2$ Hecke cusp form. This coefficient has been studied intensively for arithmetical and analytical reasons. Our goal is to obtain a slightly general result on correlations of the coefficient $\lambda_{\phi}(n)$ with a class of multiplicative functions.
Let $\F$ denote the class of all multiplicative functions  $f$ with the following weak hypotheses:

\smallskip
(i) There exists some constant $c\geq 0$ such that
\begin{equation*}
\sum_{n \leq X}|f(n)|^2 \ll X(\log X) ^{c}.
\end{equation*}

\smallskip

(ii)
Let $P$ denote the product of primes $p$ which belong to the interval $[(\log X)^{16(c+2)},X^{\delta}]$. There exists some constant $\gamma\geq 0$ such that
\begin{equation*}
\sum_{\substack{n \leq X\\ \left(n, P\right)=1}}|f(n)|^2 \ll X\frac{(\log \log X)^\gamma}{\log X},
\end{equation*}
where the implied constant depends on $c,\delta$. Here the constant $c$ is the same as in Hypothesis (i), and the parameter $\delta$ is some constant with $0<\delta<1/22$.

\begin{thm}\label{thm-main}
	Let $f\in \F$, and let $\lambda_{\phi}(n)$ be the Fourier coefficients of a  Hecke holomorphic or Hecke--Maass cusp form on ${\rm SL}_2(\ZZ)$. For any non-zero $h\ll X$, we have
	$$
	\sum_{n \leq X}f(n)\lambda_{\phi}(n+h)\ll X\frac{(\log \log X)^\frac{\gamma+1}{2}}{\log X},
	$$
	where the implied constant depends on $\delta$, the form $\phi$ and these implied constants in Hypotheses (i) and (ii).
\end{thm}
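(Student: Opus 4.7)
The plan is to decompose the sum according to whether $n$ has a prime factor in the sifting interval $I := [(\log X)^{16(c+2)}, X^\delta]$. Write
\[
S := \sum_{n \leq X} f(n)\lambda_\phi(n+h) = S_1 + S_2,
\]
where $S_2$ collects the $n$ with $(n,P)=1$ and $S_1$ collects the $n$ with at least one prime factor in $I$. The two pieces are treated very differently: $S_2$ is small because Hypothesis~(ii) forces $f$ to be small there, while $S_1$ carries prime structure allowing bilinear form analysis.

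For $S_2$, Cauchy--Schwarz and Hypothesis~(ii) give
\[
|S_2|^2 \leq \Big(\sum_{\substack{n\leq X\\(n,P)=1}} |f(n)|^2\Big)\Big(\sum_{\substack{n\leq X\\(n,P)=1}} |\lambda_\phi(n+h)|^2\Big) \ll \frac{X(\log\log X)^{\gamma}}{\log X}\cdot\Big(\sum_{\substack{n\leq X\\(n,P)=1}} |\lambda_\phi(n+h)|^2\Big).
\]
I would bound the second factor by combining an upper-bound sieve (Selberg or Brun) for the condition $(n,P)=1$ with the Rankin--Selberg type input $\sum_{n\leq X,\, d\mid n}|\lambda_\phi(n+h)|^2\ll X/d$, uniformly for $d$ up to some power of $X$; this produces the expected saving factor $\prod_{y\leq p\leq z}(1-1/p)\asymp \log y/\log z\asymp \log\log X/\log X$ (the sieve level $D\asymp z^2=X^{2\delta}\ll X^{1/11}$ is well within the range of a Rankin--Selberg level of distribution thanks to $\delta<1/22$). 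The outcome is $|S_2|\ll X(\log\log X)^{(\gamma+1)/2}/\log X$, which already matches the target.

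For $S_1$, I would apply Ramar\'e's identity
\[
\mathbf{1}_{\omega_I(n)\geq 1} = \frac{1}{\omega_I(n)}\sum_{\substack{p\mid n\\ p\in I}} 1, \qquad \omega_I(n):=\#\{p\mid n:p\in I\},
\]
substitute $n=pm$, exploit $f(pm)=f(p)f(m)$ for $(p,m)=1$, and replace $\omega_I(pm)$ by its typical value $L := \log\log z-\log\log y\asymp\log\log X$ (a Tur\'an--Kubilius estimate, or a dyadic split in $\omega_I$, absorbs the large-deviation part). This reduces $S_1$, up to admissible errors, to a bilinear expression
\[
\frac{1}{L}\sum_{y<p\leq z} f(p)\sum_{m\leq X/p} f(m)\lambda_\phi(pm+h).
\]
Cauchy--Schwarz in $m$ combined with Hypothesis~(i) bounds the $f$-side by $\sum_{m\leq X/y}|f(m)|^2\ll (X/y)(\log X)^c$, which is a large negative power of $\log X$ by the choice $y=(\log X)^{16(c+2)}$. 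The $\lambda_\phi$-side expands into a sum whose diagonal $p_1=p_2$ is controlled by Rankin--Selberg at size $X(\log X)^{c+1}$, while the off-diagonal consists of shifted convolution sums $\sum_m \lambda_\phi(p_1m+h)\overline{\lambda_\phi(p_2m+h)}$ for distinct $p_1,p_2\in I$; for these I would invoke a power-saving bound $\ll X^{1-\eta+o(1)}(p_1p_2)^{O(1)}$ uniform in the parameters. The constraint $\delta<1/22$ in Hypothesis~(ii) ensures $2\delta<\eta$ so that the combined off-diagonal is negligible, making $|S_1|$ smaller than the target by any power of $\log X$.

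The main obstacle is the uniform power-saving estimate for the shifted convolution sums with variable moduli $p_1,p_2\leq X^\delta$ and shift $h\ll X$; the uniformity in all three parameters is what forces $\delta$ to be small and is the technical heart of the argument, accessed via the Jutila--Motohashi spectral decomposition or the $\delta$-method of Duke--Friedlander--Iwaniec. A secondary subtlety is the rigorous absorption of Ramar\'e's weight $1/\omega_I(pm)$, which I would handle by restricting to the range $\omega_I(pm)\in [L/2,2L]$ and bounding the complement by a second-moment estimate using Hypothesis~(i).
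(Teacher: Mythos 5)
Your overall architecture --- remove the $n$ coprime to $P$ via Hypothesis (ii) together with a dimension-one sieve for $\lambda_{\phi}(n+h)^2$, and convert the remaining $n$ into bilinear forms in $p$ and $m$ whose diagonal is controlled by the equidistribution of $\lambda_{\phi}^2$ in progressions $n\equiv h\ (\mathrm{mod}\ p)$ and whose off-diagonal is a uniform shifted convolution sum --- is exactly the paper's BKSZ-style strategy. Your $S_2$ matches the paper's sifted set $\J_3$ essentially verbatim (the paper supplies the needed inputs in Proposition \ref{prop-rs-ap}, Corollary \ref{rs-ap-2} and Lemma \ref{lem-sieve-rs}, taking care of the local densities $\lambda_{\phi}(p)^2/p$ at primes $p\mid h$ via the Rankin--Selberg prime number theorem), and the Harcos bound you invoke for the off-diagonal is the paper's Lemma \ref{lem-shifted-sum}.

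The genuine gap is the step ``replace $\omega_I(pm)$ by its typical value $L$'' in your treatment of $S_1$. The error this introduces is
\[
\sum_{p\in I}\sum_{m}f(pm)\lambda_{\phi}(pm+h)\Big(\frac{1}{\omega_I(pm)}-\frac{1}{L}\Big)
\ \ll\ \frac{1}{L}\sum_{n\leq X}|f(n)\lambda_{\phi}(n+h)|\,\big|\omega_I(n)-L\big|,
\]
since each $n$ occurs in $\omega_I(n)$ pairs $(p,m)$; there is no oscillation left to exploit, and Cauchy--Schwarz plus Tur\'an--Kubilius (the best available, even after restricting to $\omega_I\in[L/2,2L]$, where $|1/\omega_I-1/L|\asymp|\omega_I-L|/L^2$ is typically $L^{-3/2}$) bounds this only by $L^{-1/2}X(\log X)^{c/2}\asymp X(\log X)^{c/2}(\log\log X)^{-1/2}$. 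That saves a power of $\log\log X$, not the required power of $\log X$, so this single error term already exceeds the target bound. Two repairs are available. (a) Keep the weight: for $p\in I$ with $p\nmid m$ one has $\omega_I(pm)=\omega_I(m)+1$, so the Ramar\'e weight is a function of $m$ alone and can be absorbed into the $m$-coefficient $\beta_m=f(m)/(\omega_I(m)+1)$ before Cauchy--Schwarz, with $\sum_m|\beta_m|^2\leq\sum_m|f(m)|^2$ and hence no loss; the residual case $p\mid m$ (i.e.\ $p^2\mid n$) is $O(X(\log X)^{O(1)}y^{-1/4})$ by Cauchy--Schwarz. (b) Do what the paper does: partition into the disjoint blocks $\P_\nu\M_\nu$ with $\P_\nu$ the primes in $((\nu-1)^2,\nu^2]$ and $\M_\nu$ the $m\leq X/\nu^2$ coprime to $P_\nu$, so that every $n\in\P_\nu\M_\nu$ has a \emph{unique} factorization $n=pm$ and no weight appears at all; the price is the boundary sets $\J_1\setminus\I$ and $\J_2\setminus\J_1$, which the paper bounds by $H^{-9/16}X$ and $H^{-1/8}X$ using the same arithmetic-progression input. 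With either repair (and a dyadic or shorter decomposition in $p$ so that the $m$-range is uniform before expanding the square), your diagonal and off-diagonal estimates and parameter choices go through as in the paper.
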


\vskip 5mm
Finally, we provide a brief overview of the proof of Theorem \ref{thm-main}. Recall that Bourgain, Sarnak and Ziegler \cite{BSZ-2013} formulated a finite version of Vinogradov's bilinear technique, which also appeared in an earlier paper of K\'atai. This is the so-called Bourgain--K\'atai--Sarnak--Ziegler (BKSZ for short) criterion, which is very effective and widely applied to attack Sarnak's disjointness conjecture. Our work is primarily based on the generalized BKSZ criterion. For our purpose, we have to generalize the BKSZ criterion to a greater extent, when compared with the previous works \cite{CafPerZac20, JL2019} of Cafferata--Perelli--Zaccagnini and Jiang--L\"u.

From the idea of BKSZ at the beginning, the sum of what we are concerned about
\[
\sum_{n\leq X} f(n) \lambda_{\phi}(n+h)
\]
can be rearranged as a sifted sum
\begin{equation}\label{eq-seive}
\sum_{\substack{n \leq X\\ \left(n, P\right)=1}} f(n) \lambda_{\phi}(n+h)
\end{equation}
and a combination of bilinear sums of type
\begin{equation}\label{eq-bilinear}
\sum_{m\leq X/N} \,\sum_{N<p\leq N+M} f(m)f(p)\lambda_{\phi}(pm+h),
\end{equation}
where $(\log X)^A\ll N\ll X^{\delta}$ and $M\asymp \sqrt{N}$.  After applying the Cauchy--Schwarz inequality, the bilinear sum \eqref{eq-bilinear} is generally reduced to estimate
\[
\sum_{\substack{N<p_1,p_2\leq N+M \\ p_1\neq p_2}} f(p_1)f(p_2) \sum_{m\leq X/N} \lambda_{\phi}(p_1m+h)\lambda_{\phi}(p_2m+h).
\]
To obtain cancellations
in the innermost sum, we use the work \cite{Harcos2003} of Harcos on the shifted
convolution problem. The sifted sum \eqref{eq-seive} is a standard sieve condition, and is estimated by Hypothesis (ii) and Lemma \ref{lem-sieve-rs}. To prove Lemma \ref{lem-sieve-rs}, we need to investigate the asymptotic behavior of sum
\[
\sum_{\substack{n\leq X\\  n\equiv a \moq}}\lambda_{\phi}(dn)^2,
\]
where $d$ is a square-free integer and $(ad,q)=1$.

\section{Applications}
In this section, we will  provide some examples that fit into the framework of Theorem \ref{thm-main}.

\vskip 2mm
\subsection{Coefficients of ${\rm GL}_m$ automorphic $L$-functions} Let $m\geq 2$, and let $\mathcal{A}(m)$ be the set of all cuspidal automorphic representations of $\mathrm{GL}_m$ over $\QQ$ with unitary central character. For each $\pi\in \mathcal{A}(m)$, the corresponding $L$-function is defined by absolutely convergent Dirichlet series as
\begin{equation*}
L(s,\pi) =\sum_{n=1}^{\infty}\frac{\lambda_{\pi}(n)}{n^s}
\end{equation*}
for $\Re s >1$. We refer the reader to \cite[Section 3]{JLW} for a more detailed overview. Here we are mainly concerned with the problem of estimating correlations.
\begin{equation}\label{eq-SCP-m2}
\sum_{n \leq X}\lambda_{\pi}(n)\lambda_{\phi}(n+h),
\end{equation}
where $\phi$ is an arbitrary Hecke holomorphic or Hecke--Maass cusp form on ${\rm SL}_2(\ZZ)$. The sum \eqref{eq-SCP-m2} is often called the shifted convolution sum for ${\rm GL}_m\times {\rm GL}_2$. The shifted convolution problem is to strengthen the trivial bound, which means to prove
\[
\sum_{n \leq X}\lambda_{\pi}(n)\lambda_{\phi}(n+h)=o\Big(\sum_{n \leq X}|\lambda_{\pi}(n)\lambda_{\phi}(n+h)|\Big).
\]

When $\pi\in \mathcal{A}(2)$, this sum \eqref{eq-SCP-m2} has been investigated extensively by many authors since Selberg's seminal paper. Non-trivial bounds of this sum often have deep implications, for example, moment bounds and subconvexity for $L$-functions, equidistribution of Heegner points and quantum unique ergodicity.  In particular, consider the case $\pi$ induced from $\phi$. The best-known result in the literature is
\[
\sum_{n \leq X}\lambda_{\phi}(n)\lambda_{\phi}(n+h)\ll_{\phi,\varepsilon} X^{\frac{2}{3}+\varepsilon},
\]
where $0<|h|\leq X^{2/3}$. One can refer to Jutila \cite{Jutila1996} for details. If $\pi$ comes from a Hecke--Maass cusp form on ${\rm SL}_3(\ZZ)$,  Munshi \cite{Munshi2013} explored a smooth version of  \eqref{eq-SCP-m2} and established a power saving estimate over the trivial bound. These results for shifted convolution sums are usually treated using either the circle method or the spectral method. The basic strategy of circle method is to use the Kronecker delta symbol in order to separate the variable $h$ from $n$. There are different expressions for Kronecker delta symbol, such as $\delta$ method of Duke--Friedlander--Iwaniec, Jutila'a variant of $\delta$ method, and Heath's version of the Kloosterman circle method. While the spectral method is to investigate the analytic properties or spectral decomposition of corresponding Dirichlet series for the shifted convolution sum. To the best of our knowledge, however, for any $\pi\in \mathcal{A}(m)$ with $m\geq 4$, there is no non-trivial estimates for \eqref{eq-SCP-m2}  in the literature so far.

There are some stronger hypotheses in the beautiful work of Iwaniec, Luo and Sarnak \cite{IwLuoSa-2000}. One of them is Hypothesis C, which states that for $0<h_{1} \leqslant X$, and $0<\left|h_{2}\right| \leqslant X$, one has
$$
\sum_{m \leqslant \mathrm{X}} \mu(m) \lambda_{\phi}(m) \lambda_{\phi}\left(h_{1} m+h_{2}\right) \ll \mathrm{X}^{1 / 2+\varepsilon} .
$$
They remarked that Hypothesis C with the exponent $1/2$ replaced by $1-\delta$ for $\delta > 0$ is only needed, that is to say, any small power saving is enough for their application. We hope to take at least one small step forward on this hypothesis, even though we obtain a small power saving of $\log X$. Motivated by this discussion, another sum of interest is
\begin{equation}\label{eq-sum-mu-m2}
\sum_{n \leq X}\mu(n)\lambda_{\pi}(n)\lambda_{\phi}(n+h),
\end{equation}
where $\pi\in \mathcal{A}(m)$ with $m\geq 2$. It is worth mentioning a  surprising result of Pitt \cite{Pitt-2013}, who showed that
\begin{equation}\label{eq-pitt}
\sum_{n \leq X}\mu(n)\lambda_{\phi}(n+h)\ll X^{1-\delta}
\end{equation}
unconditional holds for $h=-1$, where $\delta$ is some positive constant, and $\phi$ is a Hecke holomorphic cusp form on ${\rm SL}_2(\ZZ)$. In fact, Pitt's result may be viewed as the simplest special case of sum \eqref{eq-sum-mu-m2} for $\pi\in \mathcal{A}(1)$.

In order to apply our theorem \ref{thm-main} to the sums \eqref{eq-SCP-m2} and \eqref{eq-sum-mu-m2}, we need to verify these two hypotheses. By the Rankin--Selberg theory and the inequality $|\lambda_{\pi}(n)|^2\leq \lambda_{\pi\times \tilde{\pi}}(n)$ for all positive integers $n$ (see \cite[Lemma 3.1]{JLW}), one has
\[
\sum_{n\leq X}|\lambda_{\pi}(n)|^2\leq \sum_{n\leq X} \lambda_{\pi\times \tilde{\pi}}(n)\ll_{\pi} X.
\]
Moreover, by employing the sieve technique, we have
\[
\sum_{\substack{n\leq X \\ (n, P(Y,Z))=1}}|\lambda_{\pi}(n)|^2\ll_{\pi} X\frac{\log Y}{\log Z}
\]
for any $N_{\pi}<Y<Z \leq X^{1/(30m^2)}$, which has been shown in \cite[Lemma 5.3]{JL-2021}. Here $N_{\pi}$ is the arithmetic conductor of $\pi$ and $P(Y,Z)$ is the product of these primes $p$ which belong to the interval $[Y,Z]$. For any fixed $\pi\in \mathcal{A}(m)$, Hypotheses (i) and (ii) then hold with $c=0, \gamma=1, Y= (\log X)^{32}$ and $Z= X^{1/(30m^2)}$. Therefore, Theorem \ref{thm-main} applies the multiplicative functions $\lambda_{\pi}(n)$ and $\mu(n)\lambda_{\pi}(n)$, and then yields the following result.

\begin{thm}\label{thm-main-2}
	Let $\lambda_{\phi}(n)$ be the Fourier coefficients of a  Hecke holomorphic or Hecke--Maass cusp form on ${\rm SL}_2(\ZZ)$, and $\lambda_{\pi}(n)$ be the coefficients of $L(s,\pi)$ for any fixed $\pi\in \mathcal{A}(m)$ with $m\geq 2$. For any non-zero $h\ll X$, we have
	$$
	\sum_{n \leq X}\lambda_{\pi}(n)\lambda_{\phi}(n+h)\ll X\frac{\log\log X}{\log X}
	$$
	and
	$$
	\sum_{n \leq X}\mu(n)\lambda_{\pi}(n)\lambda_{\phi}(n+h)\ll X\frac{\log\log X}{\log X},
	$$
	where the implied constant depends on $\phi$ and $\pi$ only.
\end{thm}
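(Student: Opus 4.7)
The plan is to derive the theorem as a direct consequence of Theorem \ref{thm-main} applied, respectively, to the two multiplicative functions $f(n)=\lambda_\pi(n)$ and $f(n)=\mu(n)\lambda_\pi(n)$. The only work is to check that both belong to the class $\F$ with the constants $c=0$ and $\gamma=1$; then reading off the exponent $(\gamma+1)/2=1$ from the conclusion of Theorem \ref{thm-main} produces the asserted bound $X\log\log X/\log X$ in each case.

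For Hypothesis (i) with $c=0$, I would combine the pointwise inequality $|\lambda_\pi(n)|^2 \leq \lambda_{\pi\times\tilde{\pi}}(n)$ (cited in the excerpt from \cite[Lemma 3.1]{JLW}) with the Rankin--Selberg mean-value bound $\sum_{n\leq X}\lambda_{\pi\times\tilde{\pi}}(n) \ll_\pi X$, as already displayed in the paragraph preceding the statement, to obtain $\sum_{n\leq X}|\lambda_\pi(n)|^2 \ll_\pi X$. Since $|\mu(n)\lambda_\pi(n)|^2\leq|\lambda_\pi(n)|^2$ pointwise, the same bound transfers immediately to $f(n)=\mu(n)\lambda_\pi(n)$, and both functions satisfy Hypothesis (i) with $c=0$.

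For Hypothesis (ii) with $\gamma=1$, I would choose $\delta=1/(30m^2)$, which satisfies $0<\delta<1/22$ for every $m\geq 2$ and so lies in the admissible range declared in Hypothesis (ii). With $c=0$ the relevant cut-offs become $Y=(\log X)^{32}$ and $Z=X^\delta$, so the sifted bound \cite[Lemma 5.3]{JL-2021} applies directly and gives
\[
\sum_{\substack{n\leq X\\ (n,P)=1}}|\lambda_\pi(n)|^2 \;\ll_\pi\; X\,\frac{\log Y}{\log Z} \;\ll_{\pi,\delta}\; X\,\frac{\log\log X}{\log X},
\]
which is precisely Hypothesis (ii) with $\gamma=1$; the pointwise domination again transfers the bound to $\mu(n)\lambda_\pi(n)$. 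Plugging $c=0$ and $\gamma=1$ into Theorem \ref{thm-main} for each of the two functions then yields both displayed inequalities of the theorem.

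The only step requiring any thought is checking the compatibility of the admissible sieve range $Z\leq X^{1/(30m^2)}$ coming from \cite[Lemma 5.3]{JL-2021} with the constraint $0<\delta<1/22$ imposed in Hypothesis (ii), but this holds trivially for $m\geq 2$. I do not expect any serious obstacle in this argument: all the genuine analytic content — Harcos's shifted convolution bound, the generalized BKSZ bilinear structure, and the sieve input of Lemma \ref{lem-sieve-rs} — is already absorbed into Theorem \ref{thm-main}, so the present theorem reduces to a verification of the two hypotheses plus a routine bookkeeping of the exponent of $\log\log X$.
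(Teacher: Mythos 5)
Your proposal is correct and matches the paper's own argument essentially verbatim: the paper likewise verifies Hypothesis (i) via $|\lambda_\pi(n)|^2\le\lambda_{\pi\times\tilde\pi}(n)$ and Rankin--Selberg, verifies Hypothesis (ii) via \cite[Lemma 5.3]{JL-2021} with $Y=(\log X)^{32}$ and $Z=X^{1/(30m^2)}$, concludes $c=0$, $\gamma=1$, and applies Theorem \ref{thm-main} to both $\lambda_\pi(n)$ and $\mu(n)\lambda_\pi(n)$.
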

\begin{rem}
	If the multiplicative functions $\lambda_{\pi}(n)$ and $\mu(n)\lambda_{\pi}(n)$ are replaced by their absolute values $|\lambda_{\pi}(n)|$ and $|\mu(n)\lambda_{\pi}(n)|$, the corresponding estimates still hold with the same qualities. Especially, we get
	\[
	\sum_{n \leq X}|\lambda_{\pi}(n)| \, \lambda_{\phi}(n+h)\ll X\frac{\log\log X}{\log X}
	\]
	uniformly holds in $0<|h|\ll X$. When $\pi\in \mathcal{A}(2)$ comes from the cusp form $\phi$, Holowinsky  \cite{Holowi2009} investigated the behavior of the sum among two terms by taking absolute values and obtained
	\[
	\sum_{n \leq X}|\lambda_{\phi}(n)\, \lambda_{\phi}(n+h)| \ll \frac{X}{(\log X)^\delta},
	\]
	where $\delta$ is some positive constant. He also emphasized that the method presented in his article can not obtain a full integral power saving of $\log X$, that is, $\delta= 1$. In fact, this is determined by the Sate--Tate conjecture, which produces the best value $\delta= 2(1-8/(3\pi))\sim 0.30235$. Compared Holowinsky's result with that of ours, we obtain a full integral power saving of $\log X$ up to the small term $\log\log X$ for any $\pi\in \mathcal{A}(m)$ with $m\geq 2$. Of course, we also have to mention that the absolute value of at least one term in our summation is removed.
\end{rem}

\vskip 2mm

In order to illustrate that Theorem \ref{thm-main-2} provides a non-trivial saving, we need to estimate the correct order of magnitude of
\[
\sum_{n \leq X}|\lambda_{\pi}(n)\lambda_{\phi}(n+h)|.
\]
For simplicity of presentation,  we fix the shifted parameter $h=1$ for convenience and restrict the summation with $n$ and $n+h$ being square-free. Next, we use the standard probabilistic heuristics to predict the main term of sum
\[
S:=\sum_{n \leq X}|\mu(n)\lambda_{\pi}(n)\mu(n+1)\lambda_{\phi}(n+1)|.
\]
We refer to \cite[Section 4]{NgMark-2019} for a detailed explanation of the heuristics. Define two sequences of random variables $(X_{p})$ and $(Y_{p})$ by
$$
X_{p}(n)=\big|\mu(p^{\operatorname{ord}_{p}(n)})\,\lambda_{\pi}(p^{\operatorname{ord}_{p}(n)})\big|,  \quad  Y_{p}(n)=\big|\mu(p^{\operatorname{ord}_{p}(n+1)})\,\lambda_{\phi}(p^{\operatorname{ord}_{p}(n+1)})\big|,
$$
where $\operatorname{ord}_{p}(\cdot)$ is the $p$-adic valuation.
Associated to a random variable $Y: \mathbb{N} \rightarrow \mathbb{C}$ with image $\operatorname{im}(Y)=\{Y(n) \mid n \in \mathbb{N}\}$, its expected value to be
$$
\mathbb{E}(Y)=\sum_{a \in \operatorname{im}(Y)} a \cdot \mathbb{P}(Y=a),
$$
where for $B \subset\mathbb{N}$, the $\mathbb{P}(B)$ is given by
$$
\mathbb{P}(B)=\lim _{X \rightarrow \infty} \frac{\#\{1 \leq n \leq X \mid n \in B\}}{X} .
$$
Thus, we compute the expectations of $(X_{p}), (Y_{p})$ and $(X_{p}Y_{p})$ and then get
\begin{equation*}
\begin{aligned}
\mathbb{E}(X_{p}) =1+\Big(\frac{1}{p}-\frac{1}{p^2}\Big)(|\lambda_{\pi}(p)|-1),& \quad
\mathbb{E}(Y_{p})=1+\Big(\frac{1}{p}-\frac{1}{p^2}\Big)(|\lambda_{\phi}(p)|-1),\\
\mathbb{E}(X_{p}Y_{p})=1+\Big(\frac{1}{p}-\frac{1}{p^2}\Big)& (|\lambda_{\pi}(p)|+|\lambda_{\phi}(p)|-2).
\end{aligned}
\end{equation*}
With these notation as above, it is reasonable to make the following conjecture:
\begin{equation}\label{eq-S-conj}
S \sim \mathfrak{S}_{\pi, \phi}\Big(\sum_{n \leq X}|\mu(n)\lambda_{\pi}(n)|\Big)\,\Big(\sum_{n \leq X}|\mu(n)\lambda_{\phi}(n)|\Big) \frac{1}{X}
\end{equation}
as $X\rightarrow \infty$, where the singular series $\mathfrak{S}_{\pi, \phi}$ is equal to
\[
\mathfrak{S}_{\pi, \phi}=\prod_{p} \frac{\mathbb{E}(X_{p} Y_{p})}{\mathbb{E}(X_{p}) \mathbb{E}(Y_{p})}.
\]

  Assume that the Ramanujan conjecture holds for $\pi$ and $\phi$, that is
  $
  |\lambda_{\pi}(n)|\leq \tau_m(n)
  $
  and
  $
  |\lambda_{\phi}(n)|\leq \tau(n).
  $
  It is obvious that the singular series $\mathfrak{S}_{\pi, \phi}$ converges. We further suppose that a cuspidal automorphic representation $\pi$ comes from certain symmetric power lift of a holomorphic cusp form $\psi$ on ${\rm SL}_2(\ZZ)$.
  Let $\lambda_{{\rm sym}^{r} \psi}(n)$ denote the Dirichlet coefficients of its $r$-th symmetric power $L$-function $L_{r}(s, \psi)$, where $r\geq 1.$ Thanks to the Sato--Tate conjecture (which is now a theorem of Barnet-Lamb, Geraghty, Harris and Taylor \cite{BGHT}) and Wirsing's mean value theorem on multiplicative functions \cite[Theorem 1.1]{IK}, we have
  \begin{equation}\label{eq-sato-asmp}
  \sum_{n\leq X}|\mu(n)\lambda_{{\rm sym}^{r} \psi}(n)| \sim c_r(\psi)\frac{X}{\log^{\delta_r} X},
  \end{equation}
  where $\delta_r$ can be explicitly determined by
  \[
  \delta_r=1-\frac{2}{\pi}\int_{0}^{\pi} \frac{|\sin ((r+1) \theta)|}{\sin \theta} (\sin \theta)^2 \,\d \theta.
  \]
  A straightforward calculation of Maple gives
  \[
  \delta_r=1-\frac{4(r+1)}{r(r+2) \pi} \cot \Big(\frac{\pi}{2(r+1)}\Big).
  \]
  It is clear that $\delta_r$ is strictly increasing. Thus, for any $r \geq 1$, we have
  $$
  0.15<1-\frac{8}{3 \pi}=\delta_{1} \leq \delta_r\leq \lim _{r \rightarrow \infty} \delta_r=1-\frac{8}{\pi^{2}}<0.19.
  $$
  When this representation $\pi\in \mathcal{A}(m)$  corresponds to ${\rm sym}^{m-1} \psi$, one has $\lambda_{\pi}(n)=\lambda_{{\rm sym}^{m-1} \psi}(n)$. So we obtain from \eqref{eq-S-conj} and \eqref{eq-sato-asmp}  that
  \begin{equation*}
  S\sim  \big(\mathfrak{S}_{\pi, \phi} c_1(\psi)c_{m-1}(\psi)\big)\, \frac{X}{(\log X)^{\delta_{1}+\delta_{m-1}}}.
  \end{equation*}
  Finally, the probabilistic model gives
  \begin{equation*}
  \sum_{n \leq X}|\lambda_{\pi}(n)\lambda_{\phi}(n+1)|\gg \frac{X}{(\log X)^{\delta_{1}+\delta_{m-1}}}\gg \frac{X}{(\log X)^{3/5}},
  \end{equation*}
  where the implied constant depends on $\phi, m$. This shows that there exists a $(\log X)^{3/5}$ saving at least in Theorem \ref{thm-main} due to the cancellations of $\lambda_{\pi}(n)$ and $\lambda_{\phi}(n+1)$.

\vskip 5mm
\subsection{Divisor-bounded multiplicative functions}
Let $f(n)$ be any multiplicative function which satisfies $|f(n)|\leq \tau_m(n)$ for some $m>0$. Here $\tau_m(n)$ is the generalized divisor function, which equals to the number of ways of writing $n$ as the product of $m$ positive integers. If the function $\tau_m(n)$ meets Hypotheses (i) and (ii), then Theorem \ref{thm-main} holds for any multiplicative function $f(n)$ with $|f(n)|\leq \tau_m(n)$. First, one has the elementary bound
\[
\sum_{n\leq X}\tau_m(n)^2 \ll X(\log X)^{m^2-1},
\]
where the implied constant depends on $m$. Then it suffices to check that $\tau_m(n)$ satisfies Hypothesis (ii). One may use sieve method to get this condition similar to the proof of \cite[Lemma 5.3]{JL-2021}. However, we here provide an alternate proof, which can be directly derived from the  Brun--Titchmarsh theorem on multiplicative functions. By \cite[Theorem 1]{Shiu}, we have
\begin{equation}\label{divisor-BT}
	\sum_{\substack{n\leq X \\ (n, P(Y,Z))=1}}\tau_m(n)^2\ll \frac{X}{\log X} \exp\Big(m^2 \sum_{\substack{p\leq X\\ p\nmid P(Y,Z)}}\frac{1}{p}\Big),
\end{equation}
where $2\leq Y<Z\leq X$. Mertens' theorem then yields
\begin{equation*}\label{sel-merten-6}
	\begin{aligned}
		\sum_{\substack{p\leq X \\ p\nmid P(Y,Z)}}\frac{1}{p}=&\sum_{p\leq X}\frac{1}{p}-\sum_{p\leq Z}\frac{1}{p}+
		\sum_{p\leq Y}\frac{1}{p}\\
		=&\log\log X-\log\log Z+\log\log Y+O(1).
	\end{aligned}
\end{equation*}
Inserting this into \eqref{divisor-BT}, it follows
\[
\sum_{\substack{n\leq X \\ (n, P(Y,Z))=1}}\tau_m(n)^2\ll X(\log X)^{m^2-1}\Big(\frac{\log Y}{\log Z}\Big)^{m^2}
\]
for any $2\leq Y<Z\leq X$. Hypotheses (i) and (ii) then hold with $c=m^2-1, \gamma=m^2,  Y= (\log X)^{16(m^2+1)}$ and $Z= X^{1/23}$. Therefore Theorem \ref{thm-main} applies to any multiplicative function $f(n)$ with $|f(n)|\leq \tau_m(n)$ for some $m\geq 0$, and then yields the following result.

\begin{thm}\label{thm-main-3}
	Let $\lambda_{\phi}(n)$ be the Fourier coefficients of a  Hecke holomorphic or Hecke--Maass cusp form on ${\rm SL}_2(\ZZ)$, and $f(n)$ be any multiplicative function which satisfies $|f(n)|\leq \tau_m(n)$ for some $m\geq 0$. For any non-zero $h\ll X$, we have
	$$
	\sum_{n \leq X}f(n)\lambda_{\phi}(n+h)\ll X\frac{(\log\log X)^{\frac{m^2+1}{2}}}{\log X},
	$$
	where the implied constant depends on $\phi$ and $m$ only.
\end{thm}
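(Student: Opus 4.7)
The plan is to reduce Theorem~\ref{thm-main-3} to a direct application of Theorem~\ref{thm-main} by showing that every multiplicative function dominated by $\tau_m$ lies in the class $\F$. Since Hypotheses (i) and (ii) involve only $|f(n)|^2$, and $|f(n)|^2 \leq \tau_m(n)^2$, it suffices to verify the two conditions for $\tau_m$ itself. For Hypothesis (i), I would quote the classical elementary estimate $\sum_{n \leq X} \tau_m(n)^2 \ll X(\log X)^{m^2-1}$, which gives $c = m^2 - 1$ immediately.

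The substantive step is Hypothesis (ii). My plan is to apply Shiu's Brun--Titchmarsh theorem for non-negative multiplicative functions (\cite[Theorem 1]{Shiu}) to $\tau_m^2$, which is itself a multiplicative function controlled on primes by $m^2$. Shiu's inequality yields, for $2 \leq Y < Z \leq X$,
\[
\sum_{\substack{n \leq X \\ (n, P(Y,Z)) = 1}} \tau_m(n)^2 \ll \frac{X}{\log X} \exp\Bigl(m^2 \sum_{\substack{p \leq X \\ p \nmid P(Y,Z)}} \frac{1}{p}\Bigr).
\]
Writing the inner sum over primes as $\sum_{p\leq X} 1/p - \sum_{Y<p\leq Z} 1/p$ and invoking Mertens' theorem collapses the exponential to $(\log X)^{m^2}(\log Y/\log Z)^{m^2}$, giving the bound $X(\log X)^{m^2-1}(\log Y/\log Z)^{m^2}$.

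Finally, I would choose $Y = (\log X)^{16(c+2)} = (\log X)^{16(m^2+1)}$ and $Z = X^{1/23}$, so that the parameter $\delta = 1/23$ meets the constraint $\delta < 1/22$ from the definition of $\F$. With these choices, $\log Y/\log Z = O(\log\log X/\log X)$, and hence the restricted sum is $O\bigl(X(\log\log X)^{m^2}/\log X\bigr)$, which is exactly Hypothesis (ii) with $\gamma = m^2$. Applying Theorem~\ref{thm-main} with $c = m^2 - 1$ and $\gamma = m^2$ produces the desired estimate
\[
\sum_{n \leq X} f(n) \lambda_\phi(n+h) \ll X \frac{(\log\log X)^{(\gamma+1)/2}}{\log X} = X \frac{(\log\log X)^{(m^2+1)/2}}{\log X}.
\]
The only non-routine ingredient is Shiu's theorem; once invoked, the rest amounts to a bookkeeping substitution into Theorem~\ref{thm-main}, so the real difficulty of the problem has already been absorbed into the general statement.
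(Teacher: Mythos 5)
Your proposal is correct and follows essentially the same route as the paper: Hypothesis (i) via the elementary bound $\sum_{n\leq X}\tau_m(n)^2\ll X(\log X)^{m^2-1}$, Hypothesis (ii) via Shiu's Brun--Titchmarsh theorem combined with Mertens' theorem, the same parameter choices $c=m^2-1$, $\gamma=m^2$, $Y=(\log X)^{16(m^2+1)}$, $Z=X^{1/23}$, and then a direct appeal to Theorem \ref{thm-main}. No gaps.
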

\begin{rem}
	This theorem can be applied very directly to some functions of particular arithmetic interest:
	\begin{enumerate}
		\item the Piltz divisor function $\tau_z(n)$, where $\tau_z(n)$ is the Dirichlet coefficient of $\zeta(s)^z$ with $z\in\CC$;
		\item the number $a_n$ of integral ideals with norm $n$ in an algebraic number field $K$;
		\item the integral powers $\lambda_{\psi}(n)^l$ of Fourier coefficient of any holomorphic cusp form on $\Gamma_0(N)$, where $l\in \NN$.
	\end{enumerate}
\end{rem}

\begin{rem}
    If the Fourier coefficient $\lambda_{\phi}(n)$ is replaced by any multiplicative function $g$  with $|g(n)|\leq \tau_m(n)$, the corresponding sum has been studied
    by many number theorist such as Nair--Tenenbaum \cite{NairTen-1998}, Holowinsky \cite{Holo2010}, and Henriot \cite{Henriot-2012}. It admits a very accurately bound
    \[
    \sum_{n \leq X}f(n)g(n+h)\ll \Delta(|h|) \frac{X}{(\log X)^{2}} \prod_{p \leq X}\left(1+\frac{\left|f(p)\right|}{p}\right)\left(1+\frac{\left|g(p)\right|}{p}\right),
    \]
    where the function $\Delta(|h|)$ is explicitly given in \cite{Henriot-2012} such that $\sum_{|h|\leq X}\Delta(|h|)\ll 1$. We note that these three works also employ some sieve techniques.

\end{rem}

\vskip 5mm

\section{Review of ${\rm GL}(2)$ automorphic forms}

In this section, we collect some facts and conventions about automorphic forms on ${\rm GL}(2)$, and present some estimates and formulae for their Fourier coefficients which will be used later.

Let $k$ be an even integer, and Let $\mathcal{S}_k$ denote the set of arithmetically normalized primitive cusp forms of weight $k$ on ${\rm SL}_2(\ZZ)$, which are eigenfunctions of all the Hecke operators. Any $\phi \in \mathcal{S}_k$ has a Fourier expansion at infinity given by
	\begin{equation*}\label{fun:holo}
	\phi(z)=\sum_{n=1}^\infty \lambda_{\phi}(n)n^\frac{k-1}{2}e(nz).
	\end{equation*}
	Let $\mathcal{B}_{r}$ be the space of (weight zero) real-analytic Maass forms of Laplacian eigenvalue $1/4+\mu^2$ on ${\rm SL}_2(\ZZ)$, which are also eigenfunctions of all the Hecke operators. Any $\phi \in \mathcal{B}_{r}$ has a Fourier expansion at infinity of the form
	\begin{equation*}\label{fun:maass}
	\phi(z)=\sqrt{y}\sum_{n\neq 0} \lambda_{\phi}(n)K_{ir}(2\pi\abs{n}y)e(nx),
	\end{equation*}
	where $K_{ir}$ is the $K\mbox{-}$Bessel function. The arithmetic normalization above means that $\lambda_{\phi}(1)=1$. The eigenvalues $\lambda_{\phi}(n)$ enjoy the multiplicative property
	\begin{equation*}\label{Fc-mul-pro}
	\lambda_{\phi}(m)\lambda_{\phi}(n)=\sum_{d|(m,n)}\lambda_{\phi}\Big(\frac{mn}{d^2}\Big)
	\end{equation*}
	for all integers $m, n\geq 1.$ It follows from Rankin--Selberg theory that the Fourier coefficients $\lambda_{\phi}(n)$ are bounded on average, namely
	\begin{equation}\label{rs-gl2}
		\sum_{n\leq X}|\lambda_{\phi}(n)|^2=c_f\,X+O\big(X^{\frac{3}{5}}\big)
	\end{equation}
	for some $c_f>0$. For individual terms, we have
		\begin{equation}\label{kimsarnak-bound}
		|\lambda_{\phi}(n)|\leq n^{\frac{7}{64}}\tau(n)
	\end{equation}
	for all $n\geq1$ by the work \cite{ks} of Kim and Sarnak, where $\tau(n)$ is the divisor function. Moreover, if $\phi \in \mathcal{S}_k$, it follows from Deligne's proof \cite{D1974} of the Ramanujan--Petersson conjecture that
	\begin{equation*}\label{d-bound}
	|\lambda_{\phi}(n)|\leq \tau(n)
	\end{equation*}
hold for all $n\geq1$. It is also well known that the Fourier coefficients oscillate quite substantially. For example, the classical estimate of Wilton's type gives
	\begin{equation*}\label{wilton-estimate}
		S_{\phi}(\alpha, X):=\sum_{n \leq X} \lambda_{\phi}(n) e(n \alpha) \ll X^{\frac{1}{2}}\log X
    \end{equation*}
	uniformly in $\alpha\in \RR$, where the implied constant depends only on the form $\phi$. The explicit dependence on $\phi$ is  also studied by some number theorists. We note that this estimate plays an important role in solving shifted convolution problems, especially when the Jutila circle method is used.

For any  $\phi \in \mathcal{S}_k \cup \mathcal{B}_{r}$, the Hecke $L$-function is defined by
\[
L(s,\phi)=\sum_{n=1}^\infty \frac{\lambda_{\phi}(n)}{n^s}.
\]
This has an Euler product $L(s,\phi)= \prod\limits_{p}L_p(s,\phi)$ with the local factors
\[
L_p(s,\phi)=\big(1-\lambda_{\phi}(p)p^{-s}+p^{-2s}\big)^{-1}=\big(1-\alpha_{\phi}(p)p^{-s}\big)^{-1}\big(1-\beta_{\phi}(p)p^{-s}\big)^{-1},
\]
where $\alpha_{\phi}(p),\,\beta_{\phi}(p)$ are complex numbers with $\alpha_{\phi}(p)+\beta_{\phi}(p)=\lambda_{\phi}(p)$ and $\alpha_{\phi}(p)\beta_{\phi}(p)=1$.

Let $\chi$ be a primitive Dirichlet character modulo $q$ with $q\geq 1$. Then we define the twisted symmetric square $L$-function by means of the Euler product $L(s, \text{sym}^2 \phi\otimes \chi )=L_p(s, \text{sym}^2 \phi\otimes \chi)$ with the local factors given by
\[
L_p(s, \text{sym}^2 \phi\otimes \chi)=
\left(1-\alpha_{\phi}(p)^{2}\chi(p) p^{-s}\right)^{-1}\left(1-\chi(p)p^{-s}\right)^{-1}\left(1-\chi(p)\beta_{p}^{2} p^{-s}\right)^{-1}.
\]
It is well-known that this $L$-function extends to an entire function and satisfies a functional equation. Indeed, we have a completed $L$-function defined as
\[
\Lambda\left(s, \text{sym}^2 \phi\otimes \chi\right)=q^{3 s / 2} \gamma(s,\text{sym}^2 \phi\otimes \chi) L\left(s, \operatorname{Sym}^{2} f \otimes \chi\right),
\]
where $\gamma(s,\text{sym}^2 \phi\otimes \chi)$ is essentially a product of three gamma functions $\Gamma\left(\frac{s+\kappa_{j}}{2}\right), j=1,2,3$, with $\kappa_{j}$ depending on the weight or spectral parameter of $\phi$ and the parity of the character $\chi$, such that
$$
\Lambda\left(s, \text{sym}^2 \phi\otimes \chi\right)=\varepsilon(\text{sym}^2 \phi\otimes \chi) \Lambda\left(1-s, \text{sym}^2 \phi\otimes \bar{\chi}\right)
$$
Here $\Re \left(\kappa_{j}\right)>0$ and the $\varepsilon$-factor satisfies $|\varepsilon(\text{sym}^2 \phi\otimes \chi) |=1$.  Consider the Dirichlet series
\begin{equation*}\label{rstwist}
	L(s, {\phi}_\chi\otimes {\phi})=\sum_{n=1}^\infty \frac{\lambda_{\phi}(n)^2\chi(n)}{n^s}
\end{equation*}
for $\Re s>1.$
This has the Euler product $L(s, {\phi}_\chi\otimes {\phi}):=\prod\limits_{p}L_p(s, {\phi}_\chi\otimes {\phi})$ with the local factors
\[
L_p(s, {\phi}_\chi\otimes {\phi})=\big(1+\chi(p)p^{-s}\big)\big(1-\alpha_{\phi}(p)^2\chi(p)p^{-s}\big)^{-1}\big(1-\chi(p)p^{-s}\big)^{-1}\big(1-\beta_{\phi}(p)^2\chi(p)p^{-s}\big)^{-1}.
\]
This can be also written in terms of the Hecke eigenvalues
\begin{equation*}\label{ff-EulerHecke}
	L_p(s, {\phi}_\chi\otimes {\phi})=\big(1+\chi(p)p^{-s}\big)\big(1-\lambda_{\phi}(p^2)\chi(p)p^{-s}+\lambda_{\phi}(p^2)\chi(p)^2p^{-2s}-\chi(p)^3 p^{-3s}\big)^{-1}.
\end{equation*}
Comparing the Euler products of $L(s, \text{sym}^2 \phi\otimes \chi )$ and $L(s, {\phi}_\chi\otimes {\phi})$, one has
\begin{equation}\label{decom-twist-RS}
	L(s, {\phi}_\chi\otimes {\phi})=L(2s,\chi)^{-1}L(s,\chi)L(s, \text{sym}^2 \phi\otimes \chi ).
\end{equation}
This series $L(s, {\phi}_\chi\otimes {\phi})$ has a holomorphic continuation to all $s\in \CC$ except for possible simple pole at $s = 1$. The latter simple pole can only occur if $q=1$, in which case $\chi\equiv 1$. For  convenience, we denote $L(s, {\phi}_\chi\otimes {\phi})=L(s, {\phi}\otimes  {\phi})$ and $L(s, \text{sym}^2 \phi\otimes \chi )=L(s, \text{sym}^2 \phi)$ when $\chi\equiv 1$.
%
%

\section{Some lemmas}

\subsection{Shifted convolution Problems}
In this section, we shall discuss the uniform bounds for shifted convolution sum
\[
\sum_{n=1}^{\infty} \lambda_{\phi}(m_1n+h)\lambda_{\phi}(m_2n+h) e^{-\frac{n}{X}}.
\]
If $m_1, m_2$ are distinct square-free integers, the case of $h=-1$ has been treated in the work \cite[Theorem 1.4]{Pitt-2013} of Pitt, which plays an important role to estimate the bilinear forms in the proof of upper bound \eqref{eq-pitt}. Recently, Assing, Blomer and Li \cite[Theorem 2.6]{ABL-2021} generalized Pitt's estimate to the case of general $h$ and gave the explicit dependences on the form $\phi$ and all shifts $m_1,m_2, h$. Note that the weight function $e^{-\frac{n}{X}}$ is replaced by a smooth function $g$ with support in $[X, 2X]$ and $g^{j}(x)\ll_{j} (X/Z)^{-j}$ for all $j\geq 0$ in \cite[Theorem 2.6]{ABL-2021}. Next, in view of Harcos's result \cite{Harcos2003}, we will prove another uniform estimate for the above sum, which is enough in our applications.

\begin{lem}\label{lem-shifted-sum}
	Let $\phi \in \mathcal{S}_k \cup \mathcal{B}_{r}$. Let $m_1,\,m_2$ be positive integers with $(m_1,m_2)=1$ and $h\neq 0$. Then we have
	\begin{equation*}
	\sum_{n=1}^{\infty} \lambda_{\phi}(m_1n+h)\lambda_{\phi}(m_2n+h) e^{-\frac{n}{X}}\ll e^{\frac{(m_1+m_2)h}{2Xm_1m_2}}(m_1m_2)^{\frac{4}{5}}X^{\frac{9}{10}+\varepsilon},
	\end{equation*}
	where the implied constant depends on $\varepsilon$ and the form $\phi$.
\end{lem}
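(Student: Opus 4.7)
The plan is to recast the sum as a generalized shifted convolution problem and then invoke Harcos's uniform estimate from \cite{Harcos2003}. Set $a = m_1 n + h$ and $b = m_2 n + h$; since $n = (a-h)/m_1 = (b-h)/m_2$, the pair $(a,b)$ is constrained by the linear relation $m_2 a - m_1 b = (m_2 - m_1) h$, and the hypothesis $(m_1, m_2) = 1$ is exactly the standard GSCP coprimality condition. The exponential weight can be symmetrized as
\begin{equation*}
e^{-n/X}
= \exp\!\Bigl(\tfrac{(m_1 + m_2) h}{2\, m_1 m_2 X}\Bigr)\,
  \exp\!\Bigl(-\tfrac{a}{2 m_1 X}\Bigr)\,
  \exp\!\Bigl(-\tfrac{b}{2 m_2 X}\Bigr),
\end{equation*}
which extracts the prefactor appearing in the target bound and leaves a product of smooth rapidly decaying weights of separate effective sizes $A \asymp m_1 X$ and $B \asymp m_2 X$.

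Next, I would introduce a smooth dyadic partition of unity in the $a$ and $b$ variables. The tail $n \gg X \log X$ contributes negligibly by the exponential decay combined with the Deligne/Kim--Sarnak bound \eqref{kimsarnak-bound}, so only $O(\log^2 X)$ dyadic pairs $(A, B)$ with $A \ll m_1 X \log X$ and $B \ll m_2 X \log X$ need to be handled. For each such pair one is reduced to estimating
\begin{equation*}
D(A, B) := \sum_{m_2 a - m_1 b = (m_2 - m_1) h}
\lambda_{\phi}(a)\, \lambda_{\phi}(b)\,
W\!\Bigl(\tfrac{a}{A}\Bigr)\, W\!\Bigl(\tfrac{b}{B}\Bigr),
\end{equation*}
a generalized shifted convolution sum with moduli $m_1, m_2$ and shift $|(m_2 - m_1) h| \ll m_1 m_2 X$. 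Applying Harcos's uniform bound from \cite{Harcos2003} to each dyadic piece and choosing the spectral and geometric contributions to balance, one obtains an estimate of shape $(m_1 m_2)^{4/5} X^{9/10 + \varepsilon}$ uniformly in $(A,B)$, with the worst case being the range $A \asymp m_1 X$, $B \asymp m_2 X$. Multiplying by the extracted prefactor $\exp\!\bigl(\tfrac{(m_1+m_2)h}{2 m_1 m_2 X}\bigr)$ and summing the $O(\log^2 X)$ dyadic contributions (absorbed into $X^{\varepsilon}$) would complete the proof.

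The main obstacle will be verifying that Harcos's theorem applies with the claimed exponents $4/5$ in the modulus aspect and $9/10$ in the size aspect \emph{uniformly} across all admissible dyadic ranges, including the unbalanced regimes $A \ll B$ and $A \gg B$, and ensuring that the dependence on the shift $(m_2 - m_1) h$ (which can itself reach size $m_1 m_2 X$) does not degrade the estimate. This requires careful bookkeeping of how $m_1, m_2$ and the shift enter the bound after applying the spectral decomposition of Poincar\'e series --- in particular balancing the contribution of the exceptional spectrum (controlled by Kim--Sarnak-type bounds via \eqref{kimsarnak-bound}) against the contribution of the continuous spectrum and the main tempered spectrum --- and this uniformity across all three parameters is the most delicate point.
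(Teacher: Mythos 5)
Your proposal follows essentially the same route as the paper: the same change of variables $n_1=m_1n+h$, $n_2=m_2n+h$, the same symmetrization of $e^{-n/X}$ extracting the prefactor $e^{(m_1+m_2)h/(2Xm_1m_2)}$, and the same reduction to Harcos's uniform bound \cite[Theorem 1]{Harcos2003} for the resulting sum $D_w(m_2,m_1;(m_2-m_1)h)$. The only (harmless) difference is your dyadic partition of unity, which the paper avoids by applying Harcos's theorem directly to the full weight $w(x,y)=e^{-(x+y)/(2Xm_1m_2)}$ after checking that it satisfies the required derivative decay conditions $x^iy^jw^{(i,j)}(x,y)\ll(1+\tfrac{x}{Xm_1m_2})^{-1}(1+\tfrac{y}{Xm_1m_2})^{-1}$.
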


\begin{proof}
	We denote by $\mathcal{C}$ the left side of the claimed inequality. Introducing the new variables $n_1=m_1n+h$ and $n_2=m_2n+h$, the sum $\mathcal{C}$ can be rewritten as
	\begin{equation}\label{C-shifted}
	\begin{aligned}
	\mathcal{C}
	=&e^{\frac{(m_1+m_2)h}{2Xm_1m_2}} 	\sum_{\substack{n_1,n_2=1\\ m_2(n_1-h)=m_1(n_2-h) }}^{\infty} \lambda_{\phi}(n_1)\lambda_{\phi}(n_2) e^{-\frac{n_1}{2Xm_1}}e^{-\frac{n_2}{2Xm_2}}\\
	:=&e^{\frac{(m_1+m_2)h}{2Xm_1m_2}}D_w\big(m_2,m_1;(m_2-m_1)h\big),
	\end{aligned}
	\end{equation}
	where the notation $D_w\big(m_2,m_1;(m_1-m_2)h\big)$ is defined as in the work \cite{Harcos2003} of Harcos
	\[
	D_w\big(m_2,m_1;(m_1-m_2)h\big)=	\sum_{\substack{n_1,n_2=1\\ m_2n_1-m_1n_2=(m_2-m_1)h}}^{\infty} \lambda_{\phi}(n_1)\lambda_{\phi}(n_2) w(m_2n_1,m_1 n_2)
	\]
	with the nice weight function
	\[
	w(x,y)=e^{-\frac{x+y}{2Xm_1m_2}}.
	\]
	Clearly, this weight function $w(x,y)$ satisfies the estimate
	\[
	x^i y^jw^{(i,j)}(x,y)\ll_{i,j}\Big(1+\frac{x}{Xm_1m_2}\Big)^{-1}\Big(1+\frac{y}{Xm_1m_2}\Big)^{-1}
	\]
	for all $i,j\geq 0$. By the nontrivial uniform estimate in \cite[Theorem 1]{Harcos2003}, we obtain that
	\[
	D_w\big(m_2,m_1;(m_2-m_1)h\big)\ll_{f,\varepsilon} (m_1m_2)^{\frac{4}{5}}X^{\frac{9}{10}+\varepsilon}
	\]
	holds for any $h\neq 0$. On inserting this estimate into \eqref{C-shifted}, this lemma follows.
\end{proof}

\vskip 5mm

%
%
%
%

\subsection{Sieve conditions}

Let $\lambda_{\phi}(n)^2$ be the coefficients of Rankin--Selberg convolution $L$-function
$L(s,{\phi}\otimes {\phi})$. We denote by $P(Y,Z)$ the product of these primes $p$ which belong to the interval $[Y,Z]$,
where $2\leq Y<Z$. We emphasize that there is no restriction on $Y$ and $Z$ as in Hypothesis (ii).
In this section, we shall seek an upper estimate for the sifted sum
\begin{equation}\label{sifted-pi}
	\sum_{\substack{n \leq X\\ \left(n-a, P(Y,Z)\right)=1}}\lambda_{\phi}(n)^2
\end{equation}
with $0<|a|\ll X$.
For this purpose, we need to investigate the distribution for $\lambda_{\phi}(n)^2$ in arithmetic progressions. This means to give an asymptotic formula for
\[
\sum_{\substack{n\leq X\\  n\equiv a \moq}}\lambda_{\phi}(n)^2,
\]
where $q$ increases together with $X$. In our application to the proof of Theorem \ref{thm-main}, the modulo $q\ll X^\varepsilon$ is sufficient. However, we hope to get the modulo $q$ as large as possible such that the sum above admits an asymptotic formula, which may be of independent interest. Since there is no restriction $(a,q)=1$ in arithmetic progressions, we need to prove the following.

\begin{prop}\label{prop-rs-ap}
	Let $\phi \in \mathcal{S}_k \cup \mathcal{B}_{r}$, $d$ be a square-free integer and $(ad,q)=1$. Then we have
	\[
	\sum_{\substack{n\leq X\\  n\equiv a \moq}}\lambda_{\phi}(dn)^2= \frac{1}{\varphi(q)} H_d(1, {\phi}\otimes {\phi})G_q(1, {\phi}\otimes {\phi})\mathop{\textup{Res}}_{s=1} L(s,{\phi}\otimes {\phi})X+O\Big(d^{\frac{7}{32}}X^{\frac{3}{5}+\varepsilon}\Big)
	\]
	for any $q<X^{\frac{2}{5}-\varepsilon}$,
	where
	$H_d(1, {\phi}\otimes {\phi})$ and $G_q(1, {\phi}\otimes {\phi})$ are given by
	\begin{equation*}
	\label{Gq}
	\begin{aligned}
	H_d(1, {\phi}\otimes {\phi}) =&
	\prod_{p|d}\big(1+p^{-1}\big)^{-1}\big(\lambda_{\phi}(p)^2-\lambda_{\phi}(p^2)p^{-1}+p^{-2}\big),\\
	G_q(1, {\phi}\otimes {\phi}) =&
	\prod_{p|q}\big(1+p^{-1}\big)^{-1}\big(1-\lambda_{\phi}(p^2)p^{-1}+\lambda_{\phi}(p^2)p^{-2}- p^{-3}\big),
	\end{aligned}
	\end{equation*}
	and the implied constant depends only on $\phi, \varepsilon$.
\end{prop}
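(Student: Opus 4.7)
The plan is to detect the congruence by Dirichlet characters mod $q$, factor the resulting generating series as a classical Rankin--Selberg $L$-function times a finite Euler product capturing the shift by $d$, and then shift a Perron contour past the simple pole at $s=1$. Since $(ad,q)=1$, orthogonality gives
\[
\sum_{\substack{n\leq X\\ n\equiv a\moq}}\lambda_\phi(dn)^2 = \frac{1}{\varphi(q)}\sum_{\chi\,(\textup{mod}\,q)}\bar\chi(a)\sum_{n\leq X}\lambda_\phi(dn)^2\chi(n).
\]
For each $\chi$ I introduce $D_{d,\chi}(s):=\sum_{n\geq 1}\lambda_\phi(dn)^2\chi(n)n^{-s}$. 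A prime-by-prime inspection, exploiting the multiplicativity of $\lambda_\phi$ and the coprimality $(d,q)=1$, factors this as
\[
D_{d,\chi}(s) = L(s,\phi_\chi\otimes\phi)\,H_{d,\chi}(s),
\]
where $H_{d,\chi}$ is a finite Euler product supported on primes $p\mid d$, with local factor $(F_p(\chi(p)p^{-s})-1)/(\chi(p)p^{-s}F_p(\chi(p)p^{-s}))$ for $F_p(X):=\sum_{k\geq 0}\lambda_\phi(p^k)^2X^k$. By \eqref{decom-twist-RS} the analysis is now reduced to the behaviour of the GL$_1$ factor $L(s,\chi)$ and the GL$_3$ factor $L(s,\text{sym}^2\phi\otimes\chi)$.

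Next I would smooth the cutoff at $X$ (with a dyadic partition absorbed into $X^\varepsilon$), apply Mellin inversion, and shift the contour from $\Re s=2$ to $\Re s=3/5$. Only $\chi=\chi_0$ contributes a pole, coming from $L(s,\chi_0)$ at $s=1$; a direct calculation using the Hecke relation $\lambda_\phi(p)^2=\lambda_\phi(p^2)+1$ and $\alpha_\phi(p)\beta_\phi(p)=1$ shows that the residue, combined with $L(2,\chi_0)^{-1}$, $L(1,\text{sym}^2\phi\otimes\chi_0)$ and $H_{d,\chi_0}(1)$, reproduces exactly $H_d(1,\phi\otimes\phi)\,G_q(1,\phi\otimes\phi)\,\mathop{\textup{Res}}_{s=1}L(s,\phi\otimes\phi)\,X$ as stated.

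On the line $\Re s=3/5$, convexity yields $L(s,\chi)\ll (q(1+|t|))^{1/5+\varepsilon}$ and $L(s,\text{sym}^2\phi\otimes\chi)\ll (q(1+|t|))^{3/5+\varepsilon}$, while $L(2s,\chi)^{-1}$ is harmless on $\Re(2s)=6/5$. The Kim--Sarnak bound \eqref{kimsarnak-bound}, applied prime-by-prime to the local factors of $H_{d,\chi}$, gives $H_{d,\chi}(3/5+it)\ll d^{7/32+\varepsilon}$, and the rapid decay of the Mellin weight $\widetilde w$ handles the $t$-integral. For the non-principal characters one folds $\sum_\chi$ into the picture through the large sieve for the GL$_1$ twist together with a second-moment bound over $\chi\,(\textup{mod}\,q)$ for the symmetric-square twist; the full error contracts to $d^{7/32}X^{3/5+\varepsilon}$ provided $q<X^{2/5-\varepsilon}$.

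The main obstacle is this last estimate, i.e.\ uniformity in $q$. Per-character convexity alone gives an error of shape $X^{3/5}q^{4/5+\varepsilon}$, which is insufficient as soon as $q\gg X^\varepsilon$; the claimed range $q<X^{2/5-\varepsilon}$ is therefore unlocked only by exploiting cancellation in the $\chi$-average (or, equivalently, by a level-aspect subconvex input for $L(s,\text{sym}^2\phi\otimes\chi)$), and it is the precise balance of these mean-value estimates that pins down the exponent $2/5$.
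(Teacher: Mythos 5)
Your skeleton---orthogonality of Dirichlet characters mod $q$, the Euler-product factorization $D_{d,\chi}(s)=L(s,\phi_\chi\otimes\phi)H_{d,\chi}(s)$ (plus the finite factor at $p\mid q$ for the principal character), the decomposition \eqref{decom-twist-RS}, a Mellin/contour shift past the simple pole at $s=1$, and the residue computation via $\lambda_\phi(p)^2=\lambda_\phi(p^2)+1$---is exactly the paper's. The genuine gap is the step you yourself flag as ``the main obstacle'': the hybrid error estimate is left unexecuted, and you hedge that the range $q<X^{2/5-\varepsilon}$ might require a level-aspect subconvex input for $L(s,\mathrm{sym}^2\phi\otimes\chi)$. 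It does not. Moreover, standing on the line $\Re s=3/5$ with per-character convexity is the wrong move: once you integrate against the Mellin transform of an honest compactly supported smoothing of $\mathbf 1_{[0,X]}$ at scale $Y$, that transform only begins to decay for $|t|\gg X/Y$, and the resulting $t$-integral defeats convexity even at $q=1$. (A weight like $e^{-n/X}$ would kill the $t$-range but then you cannot recover the sharp cutoff with the claimed error.)

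What closes the argument, and what the paper does: sandwich $\mathbf 1_{[0,X]}$ between two smooth weights $w_1\le \mathbf 1_{[0,X]}\le w_2$ at scale $Y$ (legitimate because $\lambda_\phi(dn)^2\ge 0$), so the unsmoothing cost is $O\big(d^{7/32}\tau(dq)\,Y\big)$ \emph{before} dividing by $\varphi(q)$, and the Mellin integral is effectively truncated at $T=X^{1+\varepsilon}/Y$. Shift to $\Re s=\tfrac12$ and use mean values rather than pointwise bounds: for $\chi_0$, the $t$-aspect second moments $\int_{T_1/2}^{T_1}|\zeta(\tfrac12+it)|^2\,\d t\ll T_1\log T_1$ and $\int_{T_1/2}^{T_1}|L(\tfrac12+it,\mathrm{sym}^2\phi)|^2\,\d t\ll T_1^{3/2}\log T_1$; for $\chi\ne\chi_0$, induced by a primitive $\chi_1\modq$, the large sieve applied to the approximate functional equations gives $\sideset{}{^*}\sum_{\chi\modq}\int|L(\tfrac12+it,\chi_1)|^2\,\d t\ll q_1T_1\log(q_1T_1)$ and $\sideset{}{^*}\sum_{\chi\modq}\int|L(\tfrac12+it,\mathrm{sym}^2\phi\otimes\chi_1)|^2\,\d t\ll (q_1T_1)^{3/2}\log(q_1T_1)$. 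Cauchy--Schwarz and division by $\varphi(q)$ then produce an error $\ll (qX/Y)^{1/4}d^{7/32}X^{1/2+\varepsilon}$, and the choice $Y=qX^{3/5}$ balances this against the unsmoothing term $Y/\varphi(q)\ll X^{3/5+\varepsilon}$; the hypothesis $q<X^{2/5-\varepsilon}$ is precisely what keeps $Y<X^{1-\varepsilon}$. So the exponent $3/5$ in the error and the threshold $2/5$ for $q$ are dictated by the $(q_1T_1)^{3/2}$ second moment of the degree-three factor---no subconvexity, and no unresolved ``balance'' to worry about.
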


\begin{rem}
	Proposition \ref{prop-rs-ap} generalized the classical result \eqref{rs-gl2} of Rankin--Selberg to the situation of arithmetic progressions when $d=1$. Moreover, the error terms of these two results have the same quality up to the arbitrary small $\varepsilon.$
\end{rem}

\begin{proof}
	It is more convenient to deal with the related smoothing sums, so we introduce two smooth compactly supported functions $w_{\ell}$ inspired by the example in \cite[Appendix A]{Iwaniec-2014}. Let the function $g(x)$ be defined by
	\[
	g(x)=\left\{\begin{array}{ll}
	\exp (-1 / x(1-x)) & \text { if } 0<x<1, \\
	0 & \text { otherwise.}
	\end{array}\right.
	\]
	Put
	$$
	G(y)=\frac{1}{C} \int_{-\infty}^{y}g(x)\d x,
	$$
	where $C=\int_{0}^{1} g(x) \mathrm{d} x$ is an absolute constant. Using $G(y)$, we construct the two functions
	\[
	w_1(u)=G\left(\frac{u+Y}{Y}\right)-G\left(\frac{u-X}{Y}\right)
	\]
	and
	\[
	w_2(u)=G\left(\frac{u}{Y}\right)-G\left(\frac{u-X+Y}{Y}\right),
	\]
	where $0< Y<X/2$ and $Y$ will be determined later.  Then $w_{\ell}(u)$ satisfy the following properties:
	\begin{itemize}
		\item
		$w_{1}(u)=1$ for $u\in[Y,X-Y], w_{-}(u)=0$ for $u\geq X$ and $u\leq 0$;
		\item
		$w_{2}(u)=1$ for $u\in[0,X], w_{+}(u)=0$ for $u\geq X+Y$ and $u\leq -Y$;
		\item
		$w_{\ell}^{(j)}(u)\ll_j Y^{-j}$ for all $j\geq 0$ and $\ell=1,2$;
		\item the Mellin transform of $w_{\ell}(x)$ is
		\begin{equation}\label{UB_Mw}
		\begin{aligned}
		\widehat{w_{\ell}}(s)
		&= \int_0^\infty w_{\ell}(u)u^{s-1} \d u
		\\
		& = \frac{1}{s\cdots (s+j-1)}\int_0^\infty w_{\ell}^{(j)}(u)u^{s+j-1} \d u
		\\
		& \ll_j \frac{Y}{X^{1-\sigma}} \Big(\frac{X}{|s|Y}\Big)^j
		\end{aligned}
		\end{equation}
		for any $j \geq 1$ and $\sigma=\Re s> 0$.
		
		\item
		trivially $\widehat{w_{\ell}}(s)\ll X^\sigma$ for any $\sigma=\Re s> 0$ and
		\begin{equation}\label{X+Y}
		\widehat{w_{\ell}}(1)=X+O(Y).
		\end{equation}
	\end{itemize}

	We shall detect the congruence $n \equiv a \moq$ via the orthogonality of Dirichlet characters.  Thus, we obtain the identity
	\begin{equation}\label{sum-smooth-unsmooth}
	\begin{aligned}
	S(w_{\ell};q,a;d):=&\sum_{\substack{n\\  n\equiv a \moq}}\lambda_{\phi}(dn)^2w_{\ell}(n)=\frac{1}{\varphi(q)}\sum_{\chi \moq }\overline{\chi}(a)\sum_{n}\lambda_{\phi}(dn)^2\chi(n)w_{\ell}(n)\\
	=&\frac{1}{\varphi(q)} M(w_{\ell};q;d)+O\big(E(w_{\ell};q;d)\big),
	\end{aligned}
	\end{equation}
	where $M(w_{\ell};q;d)$ and $E(w_{\ell};q;d)$ are given by
	\begin{equation}\label{eq-defME}
	\begin{aligned}
	M(w_{\ell};q;d)=&	\sum_{\substack{n\\ (n,q)=1}}\lambda_{\phi}(dn)^2w_{\ell}(n),\\
	E(w_{\ell};q;d)=&\frac{1}{\varphi(q)}	\sum_{\substack{\chi \moq \\ \chi \neq \chi_0 }}\Big|\sum_{n}\lambda_{\phi}(dn)^2\chi(n)w_{\ell}(n)\Big|.
	\end{aligned}
	\end{equation}
	
	Now it suffices to estimate $M(w_{\ell};q;d)$ and $E(w_{\ell};q;d)$, respectively.
	
	\begin{lem}\label{Main-term}
		Let $M(w_{\ell};q;d)$ be defined as above. Then we have
		\[
		\begin{aligned}
		M(w_{\ell};q;d)=&H_d(1, {\phi}\otimes {\phi}) G_q(1, {\phi}\otimes {\phi})\mathop{\textup{Res}}_{s=1} L(s,{\phi}\otimes {\phi})X+O\big(d^{\frac{7}{32}}\tau(dq)  Y^{-\frac{1}{4}} X^{\frac{3}{4}}(\log X)^3\big)\\
		&+O\big(d^{\frac{7}{32}}\tau(dq)Y\big),
		\end{aligned}
		\]
		where the implied constant depends only on $\phi$.
	\end{lem}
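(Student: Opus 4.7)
The plan is a standard Mellin-and-shift argument. Starting from Mellin inversion,
\begin{equation*}
M(w_{\ell};q;d) \;=\; \frac{1}{2\pi i}\int_{(c)}\widehat{w_{\ell}}(s)\, D_{d,q}(s)\, ds, \qquad c = 1 + \tfrac{1}{\log X},
\end{equation*}
where $D_{d,q}(s) := \sum_{(n,q)=1}\lambda_{\phi}(dn)^{2}\,n^{-s}$, I would first use the Hecke multiplicativity relation $\lambda_{\phi}(m)\lambda_{\phi}(n)=\sum_{e\mid(m,n)}\lambda_{\phi}(mn/e^{2})$ to factor the Dirichlet series as
\begin{equation*}
D_{d,q}(s) \;=\; \widetilde{H}_{d}(s)\,\widetilde{G}_{q}(s)\,L(s,\phi\otimes\phi),
\end{equation*}
where $\widetilde{H}_{d}$ and $\widetilde{G}_{q}$ are finite Euler products supported on the primes dividing $d$ and $q$ respectively. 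A direct local computation at each such prime (using squarefreeness of $d$ and the hypothesis $(d,q)=1$) identifies $\widetilde{H}_{d}(1)$ and $\widetilde{G}_{q}(1)$ with the closed-form quantities $H_{d}(1,\phi\otimes\phi)$ and $G_{q}(1,\phi\otimes\phi)$ written out in Proposition~\ref{prop-rs-ap}. Moreover, the Kim--Sarnak bound \eqref{kimsarnak-bound} together with squarefreeness of $d$ yields $|\widetilde{H}_{d}(s)|\ll d^{7/32}\tau(d)$ on every vertical line with $\Re s\geq 1/2$, while $|\widetilde{G}_{q}(s)|\ll \tau(q)$ is trivial.

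Next I would shift the contour down to $\Re s = \sigma_{0}$ for some fixed $\sigma_{0}\in(\tfrac12,1)$. Inside the strip only the simple pole of $L(s,\phi\otimes\phi)$ at $s=1$ (inherited from $\zeta(s)$ via the decomposition \eqref{decom-twist-RS}) is encountered, and its residue contributes
\begin{equation*}
\widetilde{H}_{d}(1)\,\widetilde{G}_{q}(1)\,\mathop{\textup{Res}}_{s=1}L(s,\phi\otimes\phi)\,\widehat{w_{\ell}}(1).
\end{equation*}
The identity $\widehat{w_{\ell}}(1) = X + O(Y)$ from \eqref{X+Y}, combined with the bounds on $\widetilde{H}_{d}$ and $\widetilde{G}_{q}$, produces the main term together with the error $O(d^{7/32}\tau(dq)Y)$. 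The shifted integral on $\Re s = \sigma_{0}$ is then estimated by combining a convexity bound of the form $|L(\sigma_{0}+it,\phi\otimes\phi)|\ll (1+|t|)^{2(1-\sigma_{0})+\varepsilon}$ with the rapid decay bound \eqref{UB_Mw} (taking $j$ large enough to truncate the $t$-integration essentially at height $T\asymp X/Y$), together with the size bounds on $\widetilde{H}_{d}$ and $\widetilde{G}_{q}$. Optimizing the choice of $\sigma_{0}$ and $j$ so as to balance the decay of $\widehat{w_{\ell}}$ against the growth of $L$ should yield the remaining error $d^{7/32}\tau(dq)\,Y^{-1/4}X^{3/4}(\log X)^{3}$.

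The main technical obstacle, as I see it, is the local Euler-factor analysis at primes $p\mid d$: because $d$ shifts the argument of $\lambda_{\phi}(\cdot)^{2}$ inside the Dirichlet series, the local factor is no longer the usual Rankin--Selberg factor and must be recomputed from scratch via the Hecke relations. The key verification is that the resulting local correction at $s=1$ matches the explicit expression $(1+p^{-1})^{-1}(\lambda_{\phi}(p)^{2}-\lambda_{\phi}(p^{2})p^{-1}+p^{-2})$ in the statement; this is the arithmetically delicate step and the reason the formula for $H_{d}(1,\phi\otimes\phi)$ takes its particular shape. A secondary challenge is tuning $\sigma_{0}$ and $j$ in \eqref{UB_Mw} to bring the dependence on $X$ and $Y$ out in exactly the form $Y^{-1/4}X^{3/4}$; should a pointwise convexity bound prove insufficient on the shifted line, this can be patched by invoking a mean-square estimate for $L(s,\phi\otimes\phi)$ and applying Cauchy--Schwarz against $\widehat{w_{\ell}}$.
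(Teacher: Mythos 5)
Your skeleton — Mellin inversion, factoring the Dirichlet series into $L(s,\phi\otimes\phi)$ times finite Euler products at $p\mid d$ and $p\mid q$, shifting the contour, extracting the residue at $s=1$ with $\widehat{w_{\ell}}(1)=X+O(Y)$ — is exactly the paper's, and your identification of the local computation at $p\mid d$ as the delicate arithmetic step is on point. The gap is in the treatment of the shifted integral. A pointwise convexity bound cannot produce the error term $Y^{-1/4}X^{3/4}$, for any choice of $\sigma_{0}$. On $\Re s=\sigma_{0}$ one has $\widehat{w_{\ell}}(\sigma_{0}+it)\ll X^{\sigma_{0}}/(1+|t|)$ and convexity gives $(1+|t|)^{2(1-\sigma_{0})+\varepsilon}$ for the degree-four $L(s,\phi\otimes\phi)$, so the integral up to $T\asymp X/Y$ is $\ll X^{\sigma_{0}}T^{2(1-\sigma_{0})+\varepsilon}$; writing $Y=X^{\theta}$, the exponent $\sigma_{0}+2(1-\sigma_{0})(1-\theta)$ is minimized at $\sigma_{0}=1/2$ when $\theta>1/2$ (the relevant regime, since the Proposition takes $Y=qX^{3/5}$), giving $X^{3/2-\theta}$, which never reaches $X^{3/4-\theta/4}$ unless $\theta\geq 1$. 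Concretely, at $\sigma_{0}=1/2$ and $T=X^{2/5}$ convexity yields $X^{9/10}$ where $X^{3/5}$ is needed. So ``optimizing $\sigma_{0}$ and $j$'' is not a viable primary route.

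Your fallback sentence points in the right direction but is still not quite enough as stated: a mean-square estimate for the full degree-four $L(s,\phi\otimes\phi)$ on the critical line is of size $T^{2}$ per dyadic block, and Cauchy--Schwarz of that against $\int|\widehat{w_{\ell}}|^{2}\ll X^{2}/Y$ gives $X^{2}Y^{-3/2}$, worse than trivial for $Y=X^{3/5}$. What the paper actually does is put the factorization \eqref{decom-twist-RS} to work \emph{on the line} $\Re s=1/2$: writing $L(s,\phi\otimes\phi)=\zeta(2s)^{-1}\zeta(s)L(s,\mathrm{sym}^{2}\phi)$, bounding $\zeta(1+2it)^{-1}\ll\log(|t|+2)$, and applying Cauchy--Schwarz \emph{between the two factors} using the separate second moments $\int_{T/2}^{T}|\zeta(\tfrac12+it)|^{2}\,\d t\ll T\log T$ and $\int_{T/2}^{T}|L(\tfrac12+it,\mathrm{sym}^{2}\phi)|^{2}\,\d t\ll T^{3/2}\log T$. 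This gives an average size $T^{1/4}$ per unit length instead of the pointwise $T^{1+\varepsilon}$, i.e.\ a saving of $T^{3/4}$, which is precisely the deficit, and leads to $X^{1/2}T^{1/4}(\log T)^{3}\ll d^{7/32}\tau(dq)Y^{-1/4}X^{3/4}(\log X)^{3}$ upon taking $T=X^{1+\varepsilon}/Y$. You need to replace your convexity step with this factored mean-value argument for the lemma to close.
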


	\begin{proof}
		By the Mellin inversion formula
		\[
		w_{\ell}(x) = \frac{1}{2\pi i} \int_{2- i \infty}^{2+i \infty} \widehat{w_{\ell}}(s) x^{-s} \d s,
		\]
		we may write
		\[
			\sum_{\substack{n\\ (n,q)=1}}\lambda_{\phi}(dn)^2w_{\ell}(n) = \frac{1}{2\pi i } \int_{2-i\infty}^{2+i\infty} \widehat{w_{\ell}}(s) 	\sum_{\substack{n=1 \\ (n,q)=1}}^{\infty}
		\frac{\lambda_{\phi}(dn)^2}{n^s} \d s.
		\]
		The Dirichlet series appearing above has non-negative coefficients, converges in the region $\Re s >1$, and
		matches the Rankin--Selberg convolution $L$-function $L(s, {\phi}\otimes {\phi})$ except for the Euler factors at primes $p$ dividing $dq$.
		Note that $d$ is square-free and $(d,q)=1$. By the multiplicative property of $\lambda_{\phi}(n)$, we may rewrite
		\begin{equation*}
		\begin{aligned}
		\sum_{\substack{n=1 \\ (n,q)=1}}^\infty \frac{\lambda_{\phi}(dn)^2}{n^s} =&\prod_{p| d}\Big(\sum_{k=0}^{\infty} \lambda_{\phi}(p^{k+1})^2p^{-k s}\Big)\prod_{p\nmid  dq}L_p(s, {\phi}\otimes {\phi})\\
		=&L(s,{\phi}\otimes {\phi})\prod_{p| d}\Big(L_p(s, {\phi}\otimes {\phi})^{-1}\sum_{k=0}^{\infty} \lambda_{\phi}(p^{k+1})^2p^{-k s}\Big) \prod_{p|q} L_p(s, {\phi}\otimes {\phi})^{-1}\\
		=&L(s,{\phi}\otimes {\phi})H_d(s, {\phi}\otimes {\phi})G_q(s, {\phi}\otimes {\phi}),
		\end{aligned}
		\end{equation*}
		where $H_d(s, {\phi}\otimes {\phi})$ and $G_q(s, {\phi}\otimes {\phi})$ are given by
		\begin{equation*}
		\label{Gq}
		\begin{aligned}
		H_d(s, {\phi}\otimes {\phi}) =&
		\prod_{p|d}\big(1+p^{-s}\big)^{-1}\big(\lambda_{\phi}(p)^2-\lambda_{\phi}(p^2)p^{-s}+p^{-2s}\big),\\
		G_q(s, {\phi}\otimes {\phi}) =&
		\prod_{p|q}\big(1+p^{-s}\big)^{-1}\big(1-\lambda_{\phi}(p^2)p^{-s}+\lambda_{\phi}(p^2)p^{-2s}- p^{-3s}\big).
		\end{aligned}
		\end{equation*}
		Therefore, the above integral equals
		\begin{equation}\label{int-L}
		\frac{1}{2\pi i } \int_{2-i\infty}^{2+i\infty} \widehat{w_{\ell}}(s) L(s, {\phi}\otimes {\phi})H_d(s, {\phi}\otimes {\phi}) G_q(s, {\phi}\otimes {\phi}) {\rm{d}}s.
		\end{equation}

	We evaluate \eqref{int-L} by moving the line of integration to $\Re s=1/2$.  We encounter a simple pole at $s=1$ and the residue
	here is
	\[
	\widehat{w_{\ell}}(1)H_d(1, {\phi}\otimes {\phi})G_q(1, {\phi}\otimes {\phi})\mathop{\textup{Res}}_{s=1}  L(s, {\phi}\otimes {\phi}).
	\]
	It follows from \eqref{kimsarnak-bound} and the definitions of $H_d(s, {\phi}\otimes {\phi}), G_q(s, {\phi}\otimes {\phi})$ that
	\begin{equation}\label{gdq-1}
	\begin{aligned}
	H_d(s, {\phi}\otimes {\phi})\ll& \prod_{p|d} \big(p^{\frac{7}{32}}+p^{-\frac{9}{32}+\varepsilon}\big)\ll d^{\frac{7}{32}}\tau(d),\\
	G_q(s,{\phi}\otimes {\phi}) \ll& \prod_{p|q}\big(1+p^{-\frac{9}{32}+\varepsilon}\big) \ll \tau(q)
	\end{aligned}
	\end{equation}
	for any $\Re s\geq 1/2$.
	Combining these with \eqref{X+Y}, the residue equals to
	\begin{equation}\label{res-value}
	H_d(1, {\phi}\otimes {\phi})G_q(1, {\phi}\otimes {\phi})\mathop{\textup{Res}}_{s=1} L(s,{\phi}\otimes {\phi})X+O\big(d^{\frac{7}{32}}\tau(dq)Y\big).
	\end{equation}
	It remains to bound the integral on the line $\Re s=1/2$, that is
	\begin{equation}\label{int-12}
	\begin{aligned}
	&\frac{1}{2\pi i } \int_{\frac{1}{2}-i\infty}^{\frac{1}{2}+i\infty} \widehat{w_{\ell}}(s) L(s, {\phi}\otimes {\phi}) G_q(s, {\phi}\otimes {\phi}) {\rm{d}}s\\
	\ll& d^{\frac{7}{32}}\tau(dq) \int_{-\infty}^{\infty}\Big| \zeta\Big(\frac{1}{2}+it\Big)L\Big(\frac{1}{2}+it, \text{sym}^2 \phi \Big)  \widehat{w_{\ell}} \Big(\frac 12 +it \Big) \Big| \log(|t|+2) {\rm{d}}t,
	\end{aligned}
	\end{equation}
	where we use the decomposition \eqref{decom-twist-RS} and the bound $\zeta(1+it)^{-1}\ll \log(|t|+2)$ for any $t\in \RR$. Moreover, the contribution from the integration over $|t| \geq T = X^{1+\varepsilon}/Y$ is negligibly small (i.e. $O(X^{-A})$ for any $A> 0$) if we choose a sufficiently large $j$ in \eqref{UB_Mw}. Hence,
	the term in the second line of \eqref{int-12} is
	\begin{equation*}\label{int-122}
	\ll d^{\frac{7}{32}} \tau(dq)  (\log T)^2 X^{\frac{1}{2}}\max_{0\leq T_1\leq T}\frac{1}{T_1+1}\int_{T_1/2}^{T_1}\Big| \zeta\Big(\frac{1}{2}\pm it\Big)L\Big(\frac{1}{2}\pm it, \text{sym}^2 \phi \Big)  \Big|  {\rm{d}}t+X^{-A}.
	\end{equation*}
	Applying the Cauchy--Schwarz inequality and inserting the second moments of $\zeta(s)$ and $L(s, \text{sym}^2 \phi)$ with the $t$-aspect integration
	\[
	\int_{T/2}^{T}\Big| \zeta\Big(\frac{1}{2}\pm it\Big)\Big|^2  {\rm{d}}t\ll T\log T, \quad  \int_{T/2}^{T}\Big| L\Big(\frac{1}{2}\pm it, \text{sym}^2 \phi \Big)\Big|^2  {\rm{d}}t\ll T^{\frac{3}{2}}\log T,
	\]
	we obtain that the integral on the line $\Re s=1/2$ is less than
	$
	d^{\frac{7}{32}}\tau(dq)  T^{\frac{1}{4}}(\log T)^3 X^{\frac{1}{2}}.
	$
	Combining this with \eqref{res-value}, we get
	\[
	\begin{aligned}
	\sum_{\substack{n\\ (n,q)=1}}\lambda_{\phi}(n)^2w_{\ell}(n)=&H_d(1, {\phi}\otimes {\phi}) G_q(1, {\phi}\otimes {\phi})\mathop{\textup{Res}}_{s=1} L(s,{\phi}\otimes {\phi})X\\
	&+O\big(d^{\frac{7}{32}}\tau(dq)  Y^{-\frac{1}{4}} X^{\frac{3}{4}}(\log X)^3\big)+O\big(d^{\frac{7}{32}}\tau(dq)Y\big).
	\end{aligned}
	\]
	This completes the proof of Lemma \ref{Main-term}.
\end{proof}

\begin{lem}\label{Error-term}
Let $E(w_{\ell};q;d)$ be defined as in \eqref{eq-defME}. Then we have
\begin{equation*}
E(w_{\ell};q;d)\ll \Big(\frac{qX}{Y}\Big)^{\frac{1}{4}}d^{\frac{7}{32}} X^{\frac{1}{2}+\varepsilon},
\end{equation*}
where the implied constant depends only on $\phi, \varepsilon$.
\end{lem}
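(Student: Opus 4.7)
The plan is to apply the contour-integral method of Lemma \ref{Main-term} separately to each non-principal character $\chi\pmod q$, and then to average over $\chi$. For $\chi\neq\chi_0$, Mellin inversion yields
\[
S_\chi := \sum_n \lambda_{\phi}(dn)^2\chi(n)\, w_\ell(n) = \frac{1}{2\pi i}\int_{(2)} \widehat{w_\ell}(s)\,H_d(s,\phi_\chi\otimes\phi)\,L(s,\phi_\chi\otimes\phi)\,ds,
\]
where $H_d(s,\phi_\chi\otimes\phi)$ is the $\chi$-twisted analog of $H_d(s,\phi\otimes\phi)$ from Lemma \ref{Main-term}; no analog of $G_q$ appears, because $\chi$ vanishes on multiples of $q$. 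By the decomposition \eqref{decom-twist-RS}, $L(s,\phi_\chi\otimes\phi)=L(2s,\chi^2)^{-1}L(s,\chi)L(s,\mathrm{sym}^2\phi\otimes\chi)$ is entire, since $\chi$ is non-principal, so I would shift the contour to $\Re s=1/2$, crossing no pole.

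On the new contour I would use the rapid decay from \eqref{UB_Mw} to truncate to $|t|\leq T:=X^{1+\varepsilon}/Y$ with negligible error, and insert the pointwise bounds $|\widehat{w_\ell}(1/2+it)|\ll X^{1/2}/(|t|+1)$, $|H_d(1/2+it,\phi_\chi\otimes\phi)|\ll d^{7/32}\tau(d)$ (by Kim--Sarnak, exactly as in \eqref{gdq-1}), and $|L(1+2it,\chi^2)|^{-1}\ll\log(q(|t|+2))$. This gives
\[
|S_\chi|\ll d^{7/32}X^{1/2+\varepsilon}\int_{-T}^{T}\frac{\bigl|L(\tfrac12+it,\chi)\,L(\tfrac12+it,\mathrm{sym}^2\phi\otimes\chi)\bigr|}{|t|+1}\,dt + X^{-A}.
\]

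To control $\sum_{\chi\neq\chi_0}|S_\chi|$, I would split $|t|$ dyadically into blocks $[T_1/2,T_1]$ with $T_1\leq T$ and apply Cauchy--Schwarz over the pair $(\chi,t)$, reducing the estimation to
\[
\mathcal{M}_1:=\sum_{\chi\bmod q}\int_{T_1/2}^{T_1}\bigl|L(\tfrac12+it,\chi)\bigr|^2\,dt \quad\text{and}\quad \mathcal{M}_2:=\sum_{\chi\bmod q}\int_{T_1/2}^{T_1}\bigl|L(\tfrac12+it,\mathrm{sym}^2\phi\otimes\chi)\bigr|^2\,dt.
\]
The classical large sieve gives $\mathcal{M}_1\ll qT_1(\log qT_1)^c$, and a hybrid second-moment estimate for the $\chi$-twisted symmetric square $L$-function gives $\mathcal{M}_2\ll(qT_1)^{3/2+\varepsilon}$. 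Multiplying the square roots and dividing by $T_1$ (coming from the $1/(|t|+1)$ weight) yields $q^{5/4}T_1^{1/4+\varepsilon}$ per dyadic block; maximizing over $T_1\leq T$, inserting into the bound for $|S_\chi|$, and finally dividing by $\varphi(q)\asymp q$ produces the claimed $E(w_\ell;q;d)\ll(qX/Y)^{1/4}d^{7/32}X^{1/2+\varepsilon}$.

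The main obstacle is the hybrid estimate $\mathcal{M}_2\ll(qT_1)^{3/2+\varepsilon}$: this is the character-averaged analog of the pointwise $t$-aspect bound $\int_{T/2}^{T}|L(\tfrac12+it,\mathrm{sym}^2\phi)|^2\,dt\ll T^{3/2}\log T$ used in Lemma \ref{Main-term}, but averaging a degree-three $L$-function over Dirichlet characters of conductor at most $q$ while preserving the conductor-dropping exponent $3/2$ is delicate. I would expand $L(s,\mathrm{sym}^2\phi\otimes\chi)$ by its approximate functional equation of length $(qT)^{3/2}$, use orthogonality of characters to collapse the $\chi$-sum to a sum over congruence classes modulo $q$, and treat the resulting off-diagonal contribution via Poisson summation in the $q$-aspect.
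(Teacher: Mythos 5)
Your proposal follows essentially the same route as the paper's proof: Mellin inversion, a pole-free contour shift to $\Re s=\tfrac12$ using the factorization \eqref{decom-twist-RS}, truncation at $T=X^{1+\varepsilon}/Y$ via \eqref{UB_Mw}, Cauchy--Schwarz, and second moments of $L(s,\chi)$ and $L(s,\mathrm{sym}^2\phi\otimes\chi)$ obtained from approximate functional equations combined with the large sieve, leading to the same $(q T_1)^{5/4}/(qT_1)$ bookkeeping. Two minor remarks: the paper passes to the primitive character $\chi_1 \pmod{q_1}$ with $q_1\mid q$ inducing $\chi$, so Euler-factor corrections at primes $p\mid q$ do reappear (contrary to your claim that no analog of $G_q$ occurs); and the hybrid bound $\mathcal{M}_2\ll (qT_1)^{3/2+\varepsilon}$ that you single out as the main obstacle is treated in the paper as the routine consequence of the length-$(q_1T_1)^{3/2}$ approximate functional equation and the character large sieve, with no Poisson summation in the $q$-aspect required.
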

\begin{proof}
As before, we have
\[
\sum_{n}\lambda_{\phi}(dn)^2\chi(n)w_{\ell}(n)=\frac{1}{2\pi i } \int_{2-i\infty}^{2+i\infty} \widehat{w_{\ell}}(s)  \sum_{n=1}^\infty \frac{\lambda_{\phi}(dn)^2\chi(n)}{n^s}\d s.
\]
Each non-principle character $\chi\moq$ can be induced by a primitive character  $\chi \modq$ with $q_1|q$ and $q_1>1$. Then we get the Dirichlet series
\[
\sum_{n=1}^\infty \frac{\lambda_{\phi}(dn)^2\chi(n)}{n^s}=L(s, f_{\chi_1}\otimes f)H_q(s, f_{\chi_1} \otimes f)G_q(s, f_{\chi_1} \otimes f),
\]
where $H_q(s, f_{\chi_1} \otimes f)$ and $)G_q(s, f_{\chi_1} \otimes f)$ are given by
\begin{equation*}
\label{Gq}
\begin{aligned}
H_q(s, f_{\chi_1} \otimes f) =&
\prod_{p|d}\big(1+\chi_1(n)p^{-s}\big)^{-1}\big(\lambda_{\phi}(p)^2\chi_1(n)-\lambda_{\phi}(p^2)\chi_1(n)p^{-s}+\chi_1(n)p^{-2s}\big),\\
G_q(s, f_{\chi_1} \otimes f) =&
\prod_{p|q}\big(1+\chi_1(n)p^{-s}\big)^{-1}\big(1-\lambda_{\phi}(p^2)\chi_1(n)p^{-s}+\lambda_{\phi}(p^2)\chi_1(n)p^{-2s}- \chi_1(n)p^{-3s}\big).
\end{aligned}
\end{equation*}
Following the argument of Lemma \ref{Main-term}, one can derive
\[
\begin{aligned}
&\sum_{n}\lambda_{\phi}(dn)^2\chi(n)w_{\ell}(n)\\
\ll& d^{\frac{7}{32}} \tau(dq) X^{\frac{1}{2}+\varepsilon}\max_{0\leq T_1\leq T}\frac{1}{T_1+1}\int_{T_1/2}^{T_1}\Big| L\Big(\frac{1}{2}\pm it,\chi_1\Big)L\Big(\frac{1}{2}\pm it, \text{sym}^2 \phi\otimes {\chi_1} \Big)  \Big|  {\rm{d}}t,
\end{aligned}
\]
where $T=X^{1+\varepsilon}/Y.$ Then the Cauchy--Schwarz inequality gives
\[
E(w_{\ell};q;d)\ll \frac{ d^{\frac{7}{32}} \tau(dq) }{\varphi(q)} X^{\frac{1}{2}+\varepsilon}\max_{0\leq T_1\leq T}\frac{1}{T_1+1}\sum_{q_1|q} (I_1\,I_2)^\frac{1}{2}.
\]
Here $I_1, I_2$ are moments of $L(s,\chi_1)$ and $L(s, \text{sym}^2 \phi\otimes {\chi_1})$, respectively. More preciously,
\[
\begin{aligned}
I_1=&\sideset{}{^*}\sum_{\chi\modq }\int_{T_1/2}^{T_1}\Big| L\Big(\frac{1}{2}\pm it,\chi_1\Big) \Big|^2  {\rm{d}}t,\\
I_2=&\sideset{}{^*}\sum_{\chi\modq }\int_{T_1/2}^{T_1}\Big| L\Big(\frac{1}{2}\pm it,\text{sym}^2 \phi\otimes {\chi_1} \Big) \Big|^2  {\rm{d}}t.\\
\end{aligned}
\]
Recall a large sieve inequality \cite[Page 179]{IK}: for any complex number $a_n$, one has
\[
\sum_{\chi \moq}\Big|\sum_{n \leqslant N} a_{n} \chi(n)\Big|^{2} \leq(q+N)\sum_{n\leq N}|a_n|^2.
\]
Together this with the approximate functional equations of $L(s,\chi_1)$ and $L(s, \text{sym}^2 \phi\otimes {\chi_1})$, we have
\[
I_1\ll rT_1\log q_1T_1, \quad I_2\ll (q_1T_1)^{\frac{3}{2}}\log q_1T_1.
\]
Further, we obtain
\[
E(w_{\ell};q;d)\ll\Big(\frac{qX}{Y}\Big)^{\frac{1}{4}}d^{\frac{7}{32}} X^{\frac{1}{2}+\varepsilon}.
\]
\end{proof}

Now we continue to prove Proposition \ref{prop-rs-ap}. Inserting Lemma \ref{Main-term} and Lemma \ref{Error-term} into \eqref{sum-smooth-unsmooth} and taking $Y=qX^{\frac{3}{5}}$, we have
\[
S(w_{\ell};q,a;d)= \frac{1}{\varphi(q)} G_q(1, f \times f)\mathop{\textup{Res}}_{s=1} L(s,f \times f)X+O\Big(d^{\frac{7}{32}}X^{\frac{3}{5}+\varepsilon}\Big)
\]
for $\ell=1,2$ and any $q<X^{\frac{2}{5}-\varepsilon}$. By the definition of $w_{\ell}(u)$, it is easy to see
\[
S(w_{1};q,a;d)\leq \sum_{\substack{n\leq X\\  n\equiv a \moq}}\lambda_{\phi}(dn)^2\leq S(w_{2};q,a; d),
\]
which implies Proposition \ref{prop-rs-ap}.
\end{proof}

In order to estimate the sifted sum \eqref{sifted-pi} without the restriction $\big(a,P(Y,Z)\big)=1$, it is necessary to explore the distribution of $\lambda_{\phi}(n)^2$ in the general arithmetic progressions, which do not have the condition $(a,q)=1.$ Proposition \ref{prop-rs-ap} can yield the following result.

\begin{cor}
	\label{rs-ap-2}
	Let $\phi \in \mathcal{S}_k \cup \mathcal{B}_{r}$ and $q$ be a square-free integer. Then we have
	\[
	\sum_{\substack{n\leq X\\  n\equiv a \moq}}\lambda_{\phi}(n)^2= \frac{1}{q_0\varphi(q/q_0)} H_{q_0}(1, {\phi}\otimes {\phi})G_{q/q_0}(1, {\phi}\otimes {\phi})\mathop{\textup{Res}}_{s=1} L(s,{\phi}\otimes {\phi})X+O\Big(X^{\frac{3}{5}+\varepsilon}\Big)
	\]
	for any $q<X^{\frac{2}{5}-\varepsilon}$, where $q_0=(a,q)$, $H_{q_0}(1, {\phi}\otimes {\phi}),G_{q/q_0}(1, {\phi}\otimes {\phi})$ are as in Proposition \ref{prop-rs-ap}, and the implied constant depends only on $\phi,\varepsilon$.
\end{cor}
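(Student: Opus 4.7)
The plan is to reduce Corollary \ref{rs-ap-2} to Proposition \ref{prop-rs-ap} by factoring the common divisor of $a$ and $q$ out of the modulus. Write $q_0 = (a,q)$, $q = q_0 q'$, and $a = q_0 a'$. Since $q$ is square-free and $q_0 \mid q$, both $q_0$ and $q'$ are square-free, $(q_0,q')=1$, and $(a',q')=1$ by the definition of $q_0$. The congruence $n \equiv a \pmod{q}$ forces $q_0 \mid n$, so substituting $n = q_0 m$ transforms the sum into
\[
\sum_{\substack{m \leq X/q_0 \\ m \equiv a' \pmod{q'}}} \lambda_\phi(q_0 m)^2.
\]

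Next I would apply Proposition \ref{prop-rs-ap} to this transformed sum, taking the square-free integer there to be $d = q_0$, the modulus to be $q'$, the residue class to be $a'$, and the summation parameter to be $X/q_0$. The coprimality hypothesis $(a' q_0, q') = 1$ is immediate from $(a',q')=1$ together with $(q_0,q')=1$. For the modulus hypothesis, one checks that $q' \leq q < X^{2/5 - \varepsilon}$ combined with $q_0 \geq 1$ gives $q' < (X/q_0)^{2/5 - \varepsilon}$, so the Proposition applies in this regime.

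Proposition \ref{prop-rs-ap} then yields
\[
\frac{1}{\varphi(q')}\, H_{q_0}(1, \phi \otimes \phi)\, G_{q'}(1, \phi \otimes \phi) \mathop{\text{Res}}_{s=1} L(s, \phi \otimes \phi) \cdot \frac{X}{q_0} + O\bigl(q_0^{7/32} (X/q_0)^{3/5 + \varepsilon}\bigr).
\]
Absorbing the factor $1/q_0$ into the prefactor $1/\varphi(q')$ and recalling $q' = q/q_0$ reproduces exactly the main term of the Corollary. For the error, since $7/32 < 3/5$ we have $q_0^{7/32 - 3/5 - \varepsilon} \leq 1$, and therefore the error is $O(X^{3/5 + \varepsilon})$. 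The argument is essentially a bookkeeping reduction; there is no genuine analytic obstacle beyond Proposition \ref{prop-rs-ap} itself, which does all the heavy lifting. The only point requiring a little care is verifying the coprimality and modulus conditions in the degenerate case $q_0 = q$ (so $q' = 1$), where the residue restriction becomes trivial and the formula still collapses to the correct main term with $G_1 = 1$ and $\varphi(1) = 1$.
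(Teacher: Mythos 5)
Your reduction is exactly the paper's own proof: the authors also pull out $q_0=(a,q)$, rewrite the sum as $\sum_{m\leq X/q_0,\ m\equiv a_1\ ({\rm mod}\ q/q_0)}\lambda_{\phi}(q_0 m)^2$, and invoke Proposition \ref{prop-rs-ap} with $d=q_0$. Your verification of the coprimality, square-freeness, modulus-range, and error-term bookkeeping is correct, so nothing further is needed.
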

\begin{proof}
	Assume $a=a_0a_1$ and $q=q_0q_1$ with $(q_0a_1,q_1)=1$.
	Then we obtain the identical relation
	\[
	\sum_{\substack{n\leq X\\  n\equiv a \moq}}\lambda_{\phi}(n)^2=\sum_{\substack{n\leq X/q_0\\  n\equiv a_1 \modq}}\lambda_{\phi}(q_0n)^2,
	\]
	where $q_0=(a,q)$. This corollary immediately follows from Proposition \ref{prop-rs-ap}.
\end{proof}

Now we are ready to handle the sifted sum \eqref{sifted-pi} and get the following result.

\begin{lem}\label{lem-sieve-rs}
	Suppose that $0<Y<Z \leq X^{\frac{1}{25}-\varepsilon}$. Let $P(Y,Z)$ denote the product of primes $p$ which belong to the interval $[Y,Z)$. Then we have
	$$
	\sum_{\substack{n \leq X\\ \left(n-a, P(Y,Z)\right)=1}}\lambda_{\phi}(n)^2  \ll X  \frac{\log Y}{\log Z}
	$$
	uniformly in any integer $a$.
\end{lem}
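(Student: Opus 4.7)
The plan is to apply the standard Selberg upper bound sieve to the weighted sequence $\{\lambda_\phi(n)^2\}_{n \leq X}$, sifting $n-a$ by the primes in $[Y, Z)$, with Corollary~\ref{rs-ap-2} furnishing the level of distribution. I would take the sieve level $D = X^{1/5 - \eta}$ for a small fixed $\eta > \varepsilon/2$, so that every $[d_1,d_2] \leq D^2 < X^{2/5 - \varepsilon}$ lies in the range of validity of Corollary~\ref{rs-ap-2}; the hypothesis $Z \leq X^{1/25 - \varepsilon}$ further ensures $D \gg Z^5$, ample room for the Selberg truncation to preserve the main Mertens contribution.

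For each squarefree $d \mid P(Y,Z)$, Corollary~\ref{rs-ap-2} reads
\[
\sum_{\substack{n \leq X \\ n \equiv a \,(\bmod\, d)}} \lambda_\phi(n)^2 = g(d)\, c_\phi\, X + R_d, \qquad |R_d| \ll X^{3/5 + \varepsilon},
\]
where $c_\phi = \mathop{\textup{Res}}_{s=1} L(s, \phi \otimes \phi)$ and $g$ is the multiplicative function with $g(p) = H_p(1, \phi \otimes \phi)/p$ if $p \mid a$ and $g(p) = G_p(1, \phi \otimes \phi)/(p-1)$ if $p \nmid a$. Plugging the Kim--Sarnak bound \eqref{kimsarnak-bound} into the explicit formulae for $H_p$ and $G_p$, a short calculation yields $g(p) = 1/p + O(p^{-2 + 7/32})$ when $p \nmid a$ and $g(p) \ll p^{-25/32}$ when $p \mid a$; in particular $g(p) < 1$ for all $p$ above some absolute threshold (the smaller primes being harmlessly excluded from the sift at the cost of an absolute constant).

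The Selberg upper bound then reads
\[
\sum_{\substack{n \leq X \\ (n-a,\, P(Y,Z)) = 1}} \lambda_\phi(n)^2 \leq \frac{c_\phi\, X}{V(D)} + \sum_{\substack{d < D^2 \\ d \mid P(Y,Z)}} \tau_3(d)\, |R_d|,
\]
with $V(D) = \sum_{d < D,\, d \mid P(Y,Z)} \mu^2(d)\, h(d)$ and $h$ multiplicative, $h(p) = g(p)/(1 - g(p))$. The remainder is bounded by $D^2 \cdot X^{3/5 + \varepsilon} \ll X^{1 - \eta}$ and is absorbed. Since $D \gg Z^5$, the truncation preserves the Euler product, yielding
\[
V(D) \gg \prod_{Y \leq p < Z}\bigl(1 + h(p)\bigr) \gg \frac{\log Z}{\log Y},
\]
the latter estimate following from $\sum_{Y \leq p < Z} g(p) = \log\log Z - \log\log Y + O(1)$, which is Mertens' theorem up to a bounded contribution from the at most $\omega(a) \ll (\log X)/\log Y$ exceptional primes $p \mid a$, each of which contributes only $O(Y^{-25/32})$.

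The main obstacle is ensuring uniformity in $a$: the sieve density $g(p)$ depends on whether $p \mid a$, but the analysis above shows this dependence produces only a bounded perturbation, since the ``bad'' primes are few and each contributes an absolutely summable amount. Combining the main term with the negligible remainder yields the claimed bound uniformly in any integer $a$.
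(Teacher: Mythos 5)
Your proof is correct and follows essentially the same route as the paper: both take Corollary \ref{rs-ap-2} as the level-of-distribution input, compute the same density function $g(p)$ (distinguishing $p\mid a$ from $p\nmid a$), use the Kim--Sarnak bound to show the sieve has dimension one, and finish with a standard upper-bound sieve whose remainder is killed by $Z\leq X^{1/25-\varepsilon}$. The only deviation is cosmetic: you run the Selberg sieve with level $D=X^{1/5-\eta}$ (so that $[d_1,d_2]\leq D^2<X^{2/5-\varepsilon}$), whereas the paper invokes the beta sieve of \cite[Corollary 6.2]{IK} with $\beta=s=10$ and level $Z^{10}$; both give a remainder $O(X^{1-\eta'})$ and an identical main term.
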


\begin{proof}
	This lemma will follow from a standard application of sieve method and Proposition \ref{prop-rs-ap}. To estimate the contribution of main term in Proposition \ref{prop-rs-ap} to the sifted sum \eqref{sifted-pi}, we have to compute the sieve dimension. Additionally, the sieve dimension also directly determines the choice of sieve level. So we start to deal with a product of the type:
	\[
	\prod_{w\leq p<z}(1-g(p))^{-1},
	\]
	where $0<w<z$ and $g(p)$ is the density function. By Corollary \ref{rs-ap-2}, the density function at primes is given by
	\[
	g(p)=\begin{cases}
	\frac{1}{p+1}\Big(\lambda_{\phi}(p)^2-\frac{\lambda_{\phi}(p^2)}{p}+\frac{1}{p^2}\Big) \quad &\text{if } p\mid a, \\
	\frac{p}{p^2-1}\Big(1-\frac{\lambda_{\phi}(p^2)}{p}+\frac{\lambda_{\phi}(p^2)}{p^2}- \frac{1}{p^3}\Big) \quad &\text{if } p\nmid a.
	\end{cases}
	\]
	By the estimate \eqref{kimsarnak-bound}, one has
	\[
	g(p)=\begin{cases}
	\frac{\lambda_{\phi}(p)^2}{p}+O\big(p^{-\frac{39}{32}+\varepsilon}\big)  \quad &\text{if } p\mid a, \\
	\frac{1}{p}+O\big(p^{-\frac{39}{32}+\varepsilon}\big) \quad &\text{if } p\nmid a.
	\end{cases}
	\]
	Together this with Mertens' theorem or prime number theorem of Rankin--Selberg $L$-function $L(s,{\phi}\otimes {\phi})$, we have
	\begin{equation}\label{g}
	\prod_{w\leq p<z}(1-g(p))^{-1} \sim  \frac{\log z}{\log w}.
	\end{equation}
	This means that the sieve dimension of the function $\lambda_{\phi}(n)^2$ is one.
	
	Next, we apply the upper-bound sieve (see, for example, \cite[Corollary 6.2]{IK}). Since the sieve dimension equals one, the parameters $\beta$ and $s$ in \cite[Corollary 6.2]{IK}) can be taken $\beta=s=10$. Thus, applying Corollary \ref{rs-ap-2} and \eqref{g}, we find
	\begin{equation*}
	\begin{aligned}
	\sum_{\substack{n \leq X\\ \left(n-a, P(Y,Z)\right)=1}}\lambda_{\phi}(n)^2 \ll  & X  \prod_{p|P}(1-g(p))
	+X^{\frac{3}{5}+\varepsilon}Z^{10}\\
	\ll & X  \frac{\log Y}{\log Z}
	\end{aligned}
	\end{equation*}
	for any $Z\leq X^{\frac{1}{25}-\varepsilon}.$
\end{proof}

\section{Proof of Theorem \ref{thm-main}}

\smallskip
For convention, we write $H= (\log X)^{8c+16}$ and $K= X^{\delta/2}$,
where $c$ and $\delta>0$ are the same constants as in Hypotheses (i) and (ii). We emphasize that $\delta$ is a sufficiently small constant. Let $\nu\in[H,K]$, and set
\begin{equation*}
\I_\nu = ((\nu-1)^2,\nu^2], \quad \ P_\nu = \prod_{(H-1)^2 < p \leq \nu^2} p,
\end{equation*}
\begin{equation*}
\P_\nu = \{p\in \I_\nu\}, \quad \M_\nu = \Big\{m\in \big[1,X/\nu^2\big]: (m, P_\nu)=1 \Big\},
\end{equation*}
\vskip 0.01mm
\begin{equation*}
\P_\nu\M_\nu = \{pm: p\in\P_\nu, m\in\M_\nu\},
\end{equation*}
\begin{equation*}
\I= \bigcup_{H\leq \nu \leq K} \P_\nu\M_\nu \ \ \text{and} \ \ \J= [1,X] \setminus \I.
\end{equation*}
The intervals above are always meant as subsets of $\NN$. Note that $\P_\nu\M_\nu \subset [1,X]$ and each $n\in \P_\nu\M_\nu$ can be written in a unique way as $n=pm$ with $p\in\P_\nu$ and $m\in\M_\nu$.  Moreover, the sets $\P_\nu\M_\nu$ are pairwise disjoint for $H\leq \nu\leq K$.
First of all, we split the sum into two parts according to the variable $n\in \I$ or $n\in \J$
\begin{equation}\label{2-4}
\begin{aligned}
\sum_{n\leq X} f(n) \lambda_{\phi}(n+h) =& \sum_{n\in\I} f(n) \lambda_{\phi}(n+h) + \sum_{n\in\J} f(n) \lambda_{\phi}(n+h)\\
:=& S_\I(x) + S_\J(x).
\end{aligned}
\end{equation}

\vskip 2mm

\subsection{Evaluation of $S_\I(x)$}

Since $\P_\nu\M_\nu$ are pairwise disjoint for $H\leq \nu\leq K$, we obtain that
\begin{equation}
\label{2-5}
S_\I(x)\ll  \sum_{H\leq \nu\leq K}\Bigg| \sum_{pm\in\P_\nu\M_\nu} f(pm)\lambda_{\phi}(pm+h)\Bigg|.
\end{equation}
For $pm\in\P_\nu\M_\nu$, we have $p\in \P_\nu$ and $m\in\M_\nu$. The $f(pm)$ in \eqref{2-5} can be factored as $f(p)f(m)$ by its multiplicativity. Thus, we get
\begin{equation*}
\label{2-5-1}
S_\I(x) \ll \sum_{H\leq \nu\leq K}\sum_{m\in\M_\nu} |f(m)| \Big|\sum_{p\in\P_\nu} f(p)\lambda_{\phi}(pm+h)\Big|.
\end{equation*}
The inner sum could be estimated by using the Cauchy--Schwarz inequality
\begin{equation}\label{2-6}
\begin{aligned}
&\sum_{m\in\M_\nu} |f(m)|\, \Big|\sum_{p\in\P_\nu} f(p)\lambda_{\phi}(pm+h)\Big| \\
\leq &\Big(\sum_{m\in\M_\nu} |f(m)|^2\Big)^{\frac{1}{2}} \, \Big(\sum_{m\in\M_\nu} \Big|\sum_{p\in\P_\nu} f(p)\lambda_{\phi}(pm+h) \Big|^2 \Big)^{\frac{1}{2}} \\
\leq&  \Big(\sum_{m\in\M_\nu} |f(m)|^2\Big)^{\frac{1}{2}} \, \Big(\sum_{m\leq X/\nu^2} \Big|\sum_{p\in\P_\nu} f(p)\lambda_{\phi}(pm+h) \Big|^2 \Big)^{\frac{1}{2}}\\
\ll & \Big(\sum_{m\in\M_\nu} |f(m)|^2\Big)^{\frac{1}{2}}\, \Big(\sum_{p_1,p_2\in\P_\nu}|f(p)|^2 \, \Big| \sum_{m=1}^\infty \lambda_{\phi}(p_1m+h)\lambda_{\phi}(p_2m+h)e^{-\frac{\nu^2 m}{X}} \Big|\Big)^{\frac{1}{2}},
\end{aligned}
\end{equation}
where the last step inserts a redundant weight function and uses the inequality of arithmetic and geometric means.

The diagonal contribution in \eqref{2-6}, that is $p_1=p_2$ for each $\nu$ yields at most
\begin{equation}\label{2-6-1}
\begin{split}
&\Big(\sum_{m\in\M_\nu} |f(m)|^2\Big)^{\frac{1}{2}} \Big(e^{\frac{h}{X}}\sum_{p\in\P_\nu}|f(p)|^2 \sum_{\substack{n=1\\  n\equiv h\mop}}^{\infty}|\lambda_{\phi}(n)|^2e^{-\frac{n}{X}}\Big)^{\frac{1}{2}}\\
\ll  & \Big(\max_{p\in\P_\nu} \sum_{\substack{n=1\\  n\equiv h\mop}}^{\infty}|\lambda_{\phi}(n)|^2e^{-\frac{n}{X}}\Big)^{\frac{1}{2}} \Big(\sum_{p\in\P_\nu,m\in\M_\nu} |f(pm)|^2\Big)^{\frac{1}{2}} ,
\end{split}
\end{equation}
by using the multiplicative property of $f(n)$ and the fact $h\ll X$. We shall use the properties of $\lambda_{\phi}(n)$ and partial summations to estimate the first term on the second line of \eqref{2-6-1}. Corollary \ref{rs-ap-2} can be directly used. After inserting the bounds in \eqref{gdq-1}, we have
\begin{equation}\label{arithpro-pv}
\begin{aligned}
\sum_{\substack{n=1\\  n\equiv h\mop}}^{\infty}|\lambda_{\phi}(n)|^2e^{-\frac{n}{X}}\ll&\sum_{n=1}^{\infty}|\lambda_{\phi}(pn)|^2e^{-\frac{pn}{X}}+\max_{(p,h)=1} \sum_{\substack{n=1\\  n\equiv h\mop}}^{\infty}|\lambda_{\phi}(n)|^2e^{-\frac{n}{X}}\\
\ll& |\lambda_{\phi}(p)|^2\sum_{n=1}^{\infty}|\lambda_{\phi}(n)|^2e^{-\frac{pn}{X}}+\max_{(p,h)=1} \sum_{\substack{n=1\\  n\equiv h\mop}}^{\infty}|\lambda_{\phi}(n)|^2e^{-\frac{n}{X}}\\
\ll& v^{-\frac{25}{16}}X
\end{aligned}
\end{equation}
for any $p\in\P_\nu$. Hence, by summing over $\nu$ and the Cauchy--Schwarz inequality, the diagonal contribution to $S_\I(x)$ is less than
\begin{equation}\label{2-6-2}
\begin{split}
&X^{\frac{1}{2}}\Big(\sum_{H\leq \nu\leq H}v^{-\frac{25}{16}}\Big)^{\frac{1}{2}}\Big(\sum_{H\leq \nu\leq K}\sum_{p\in\P_\nu,m\in\M_\nu} |f(pm)|^2\Big)^{\frac{1}{2}}\\
&\ll \frac{X^{\frac{1}{2}}}{H^{\frac{9}{32}}}\Big(\sum_{n\leq X} |f(n)|^2\Big)^{\frac{1}{2}}\\
& \ll \frac{X(\log X)^{\frac{c}{2}}}{H^{\frac{9}{32}}}.
\end{split}
\end{equation}

Now we turn to treat the off-diagonal terms. For $p\not= q$, the innermost sum over $m$ is just a shifted convolution sum. Lemma \ref{lem-shifted-sum} can be applied and gives
\[
\sum_{m=1}^\infty \lambda_{\phi}(p_1m+h)\lambda_{\phi}(p_2m+h)e^{-\frac{\nu^2 m}{X}} \ll \nu^{\frac{7}{5}}X^{\frac{9}{10}+\varepsilon}
\]
for any $h\ll X$. Hence, by summing over $\nu$ and the Cauchy--Schwarz inequality, the contribution to $S_\I(x)$ of the off-diagonal terms in the last line of \eqref{2-6} is then at most
\begin{equation}\label{2-6-3}
\begin{aligned}
& X^{\frac{9}{20}+\varepsilon}\Big(\sum_{H\leq \nu\leq K}\nu^{\frac{9}{5}}\Big)^{\frac{1}{2}}\Big(\sum_{H\leq \nu\leq K}\sum_{p\in\P_\nu,m\in\M_\nu} |f(pm)|^2\Big)^{\frac{1}{2}}\\
& \ll K^{\frac{11}{10}}X^{\frac{9}{20}+\varepsilon}.
\end{aligned}
\end{equation}

Putting the estimates \eqref{2-6-2} and \eqref{2-6-3} together, and inserting the values of parameters $H, K$, we have
\begin{equation}
\label{2-6-4}
S_\I(x) \ll  H^{-\frac{9}{32}}X(\log X)^{\frac{c}{2}}+K^{\frac{11}{10}}X^{\frac{9}{20}+\varepsilon}\ll \frac{X}{\log X}.
\end{equation}

\vskip 2mm

\subsection{Evaluation of $S_\J(x)$}
In order to estimate the contribution of corresponding sums over the set $\J$, we define the following subsets of $[1,X]$:
\begin{equation*}
\begin{split}
\J_1^{(\nu)} &= \Big\{n\in[1,X]: n \ \text{has exactly one divisor in $\P_\nu$ and none in $\bigcup_{H\leq h < \nu}\P_h$}\Big\}, \\
\J_1 &= \bigcup_{H\leq \nu\leq K} \J_1^{(\nu)}, \\
\J_2 &= \Big\{n\in[1,X]: n \ \text{has at least one prime factor in $\bigcup_{H\leq \nu \leq K}\P_\nu$}\Big\},  \\
\J_3 &= \Big\{n\in[1,X]: n \ \text{has no prime factors in ${\bigcup_{H\leq \nu \leq K}\P_\nu}$}\Big\}.
\end{split}
\end{equation*}
By the definitions of these subsets of $[1,X]$, it is clear that $P_\nu\M_\nu \subset\J_1^{(\nu)}$. Thus we have $\I\subset\J_1$. Moreover, $\J_2\cup \J_3 =[1,X]$ and $\J_2\cap \J_3 =\emptyset$. Thus, we get
\begin{equation*}
\J \subset (\J_1\setminus \I) \cup (\J_2\setminus \J_1) \cup \J_3.
\end{equation*}
As a consequence, it follows from Hypothesis (i)  that
\begin{equation}
\label{2-9}
\begin{split}
|S_\J(x)| \ll &\sum_{n\in\J_1\setminus\I} |f(n)\lambda_{\phi}(n+h)| + \sum_{n\in\J_2\setminus\J_1} |f(n)\lambda_{\phi}(n+h)| + \sum_{n\in\J_3} |f(n)\lambda_{\phi}(n+h)| \\
\ll& X^{\frac{1}{2}}(\log X)^{\frac{c}{2}}\bigg(\sum_{n\in\J_1\setminus\I} |\lambda_{\phi}(n+h)|^2+\sum_{n\in\J_2\setminus\J_1} |\lambda_{\phi}(n+h)|^2\bigg)^{\frac{1}{2}}\\
&+ \bigg(\sum_{n\in\J_3}|f(n)|^2\Big)^{\frac{1}{2}} \bigg(\sum_{n\in\J_3}|\lambda_{\phi}(n+h)|^2\Big)^{\frac{1}{2}}.
\end{split}
\end{equation}

We first deal with the summation over $n\in \J_1\setminus\I$. It is obvious that
\[
\J_1^{(\nu)}\setminus \\P_\nu\M_\nu \subset \P_\nu \Big(\frac{x}{\nu^2},\frac{x}{(\nu-1)^2}\Big].
\]
Hence, we have
\begin{equation*}
\sum_{n\in\J_1\setminus\I} |\lambda_{\phi}(n+h)|^2 \ll \sum_{H\leq \nu\leq K} \sum_{p\in \P_\nu} \sum_{\substack{|n-X-h|\ll X/\nu \\ n\equiv h\mop}}|\lambda_{\phi}(n)|^2.
\end{equation*}
The innermost sum is actually related to the coefficients $|\lambda_{\phi}(n)|^2$ in arithmetic progression over a short interval. Notice that the length of short interval and the modulo of arithmetic progression satisfies the inequality
$
p\ll (X/\nu)^{\frac{2}{5}-\varepsilon}.
$
So we deduce from Corollary \ref{rs-ap-2} and the estimate \eqref{gdq-1}that
\[
\sum_{\substack{|n-X-h|\ll X/\nu \\  n\equiv h\mop}}|\lambda_{\phi}(n)|^2\ll \nu^{-\frac{41}{16}} X.
\]
Further, we obtain
\begin{equation}\label{2-11}
\begin{aligned}
\sum_{n\in\J_1\setminus\I} |\lambda_{\phi}(n+h)|^2\ll& X\sum_{H\leq \nu\leq K} \nu^{-\frac{25}{16}}\\
\ll& H^{-\frac{9}{16}}X.
\end{aligned}
\end{equation}

Next, we deal with the summation over $n\in \J_2\setminus \J_1$. On account of
\begin{equation*}
\J_2\setminus \J_1 \subset \bigcup_{H\leq \nu\leq K} \{n\in[1,X]: n \ \text{has at least two prime factors in $\P_\nu$}\},
\end{equation*}
we get
\begin{equation*}
\sum_{n\in\J_2\setminus\J_1} |\lambda_{\phi}(n+h)|^2 \ll  \sum_{H\leq \nu\leq K} \sum_{p_1,p_2\in\P_\nu} \sum_{\substack{n \leq X+h\\  n\equiv h\mopq}}|\lambda_{\phi}(n)|^2.
\end{equation*}
Similar to the arguments of \eqref{arithpro-pv}, we use Corollary \ref{rs-ap-2} again and then obtain
\begin{equation}\label{2-12}
\begin{aligned}
\sum_{n\in\J_2\setminus\J_1} |\lambda_{\phi}(n+h)|^2 \ll&  X  \sum_{H\leq \nu\leq K} \Big(\frac{|\P_\nu|}{\nu^{25/16}}\Big)^2\\
\ll& H^{-\frac{1}{8}}X.
\end{aligned}
\end{equation}

Finally, we deal with these two summations over $n\in \J_3$. By Lemma \ref{lem-sieve-rs} and Hypothesis (ii), we have
\begin{equation}\label{2-13}
\sum_{n\in\J_3}|\lambda_{\phi}(n+h)|^2=\sum_{\substack{n\leq X+h \\ (n-h,P)=1}}|\lambda_{\phi}(n)|^2 \ll X \frac{\log\log X}{\log X}
\end{equation}
and
\begin{equation}\label{2-14}
\sum_{n\in\J_3}|f(n)|^2=\sum_{\substack{n\leq X \\ (n,P)=1}}|f(n)|^2 \ll  X\frac{(\log\log X)^\gamma}{\log X},
\end{equation}
where the notation $P$ is given in Hypothesis (ii).

Now we are ready to estimate the sum $S_\J(x)$. On inserting the estimates \eqref{2-11}--\eqref{2-14} into \eqref{2-9}, we obtain
\begin{equation}
\label{2-13}
S_\J(x) \ll H^{-\frac{1}{16}} X (\log X)^{\frac{c}{2}} + X\frac{(\log \log X)^\frac{\gamma+1}{2}}{\log X}\ll X\frac{(\log \log X)^\frac{\gamma+1}{2}}{\log X}.
\end{equation}
Therefore, our theorem \ref{thm-main} follows from \eqref{2-4}, \eqref{2-6-4} and \eqref{2-13}.

\vskip 10mm


\begin{thebibliography}{10}

\bibitem{ABL-2021}
E.~Assing, V.~Blomer and J.~Li,
\newblock Uniform {T}itchmarsh divisor problems.
\newblock {\em Adv. Math.}, 393 (2021), Paper No. 108076, 51pp.

\bibitem{BGHT}
T. Barnet-Lamb, D. Geraghty, M. Harris and R. Taylor.
\newblock A family of Calabi--Yau varieties and potential automorphy II.
\newblock {\em Publ. Res. Inst. Math. Sci.}, 47 (2011), 29--98.



\bibitem{BSZ-2013}
J. Bourgain, P. Sarnak and T. Ziegler.
\newblock Disjointness of M\"obius from horocycle flows.
In {\it From Fourier Analysis and Number Theory to Radon Transforms and Geometry}, ed. by H.~M. Farkas {\it et al.}, Developments in Mathematics {28}, Springer, New York, 2013, 67--83.


\bibitem{CafPerZac20}
M. Cafferata, A. Perelli and A. Zaccagnini.
\newblock An extension of the Bourgain--Sarnak--Ziegler theorem with modular applications.
\newblock {Q. J. Math.}, 71 (2020), 359--377.


\bibitem{D1974}
P. Deligne.
\newblock La conjecture de {W}eil. {I}.
\newblock  {Publ. Math. Inst. Hautes \'{E}tudes Sci.}, 43 (1974), 273--307.




	


\bibitem{ht}
R.~R. Hall and G. Tenenbaum.
\newblock {\em Divisors}.
\newblock Cambridge Tracts in Mathematics, 90. Cambridge
University Press, Cambridge, 1988.


\bibitem{Harcos2003}
G. Harcos.
\newblock An additive problem in the Fourier coefficients of cusp forms.
\newblock {Math. Ann.} 326 (2003), 347--365.

\bibitem{Henriot-2012}
K. Henriot.
\newblock  Nair--Tenenbaum bounds uniform with respect to the discriminant.
\newblock {Math. Proc. Cambridge Philos. Soc.}, 152:3 (2012), 405--424.


\bibitem{Holowi2009}
R. Holowinsky.
\newblock A sieve method for shifted convolution sums.
\newblock {Duke Math. J.} 146 (2009), 401--448.

\bibitem{Holo2010}
R. Holowinsky.
\newblock Sieving for mass equidistribution.
\newblock {Ann. of Math.} 172 (2010), 1499--1516.


\bibitem{IK}
H. Iwaniec  and E. Kowalski.
\newblock {\em Analytic number theory}, vol.~53 of {\em American Mathematical
	Society Colloquium Publications}.
\newblock American Mathematical Society, Providence, RI, 2004.


\bibitem{Iwaniec-2014}
H. Iwaniec.
\newblock {\em Lectures on the Riemann zeta function}, University Lecture Series 62
\newblock American Mathematical Society, Providence, RI, 2014.

\bibitem{IwLuoSa-2000}
H. Iwaniec, W. Luo  and P. Sarnak.
\newblock Low lying zeros of families of {$L$}-functions.
\newblock {\em  Inst. Hautes \'{E}tudes Sci. Publ. Math.}, 91  (2000), 55--131.



\bibitem{JL2019}
Y. Jiang and G. L\"u.
\newblock The generalized Bourgain--Sarnak--Ziegler criterion and its application to additively twisted sums on ${\rm GL}_m$.
\newblock {Sci. China Math.}, 64 (2021), 2207--2230.

\bibitem{JL-2021}
Y. Jiang and G. L\"u.
\newblock Cancellation in algebraic twisted sums on ${\rm GL}_m$.
\newblock {\em Forum Math.}, 33 (2021), 1061--1082.

\bibitem{JLW}
Y. Jiang, G. L\"u and Z. Wang.
\newblock Exponential Sums with Multiplicative Coefficients without the Ramanujan conjecture.
\newblock {\em Math. Ann.}, 379 (2021), 589--632.







\bibitem{Jutila1996}
M. Jutila.
\newblock A variant of the circle method.  Sieve Methods, Exponential Sums and their Applications in Number Theory. Cambridge University Press, Cambridge (1996)




%


\bibitem{ks}
H.~H. Kim.
\newblock Functoriality for the exterior square of {${\rm GL}_4$} and the
symmetric fourth of {${\rm GL}_2$}.
\newblock {\em J. Amer. Math. Soc.}, 16 (2003), 139--183.
\newblock With appendix 1 by Dinakar Ramakrishnan and appendix 2 by Kim and Peter Sarnak.

\bibitem{Klurman-2017}
O. Klurman.
\newblock Correlations of multiplicative functions and applications.
\newblock {\em Compos. Math.}, 153:8 (2017), 1622--1657.

%
%
%




%


\bibitem{Munshi2013}
R. Munshi.
\newblock Shifted convolution sums for ${\rm GL}(3)\times {\rm GL}(2)$.
\newblock {Duke Math. J.} 162 (2013), 2345--2362.





\bibitem{NairTen-1998}
M. Nair and G. Tenenbaum.
\newblock Short sums of certain arithmetic functions.
\newblock {\em  Acta Math.}, 180:1 (1998), 119--144.



\bibitem{NgMark-2019}
N. Ng and M. Thom.
\newblock  Bounds and conjectures for additive divisor sums.
\newblock {\em Funct. Approx. Comment. Math.}, 60:1 (2019), 97--142.

\bibitem{Pitt-2013}
N. J. E. Pitt.
\newblock  On an analogue of Titchmarsh's divisor problem for holomorphic cusp forms.
\newblock {\em  J. Amer. Math. Soc.}, 26:3 (2013), 735--776.

%
%



%






%

\bibitem{Shiu}
P. Shiu.
\newblock A Brun--Titchmarsh theorem for muliplicative functions.
\newblock{\em J. Reine Angew. Math.}, 313 (1980), 161--170.

%


\end{thebibliography}
\end{document}